\documentclass[twoside,11pt,a4paper]{article}

\usepackage{fullpage}
\usepackage{amssymb}
\usepackage{amsmath}
\usepackage{amsthm}
\usepackage{latexsym}
\usepackage{color}
\usepackage{url}
\usepackage[authoryear]{natbib}
\usepackage{enumitem}
\usepackage[colorlinks=true,citecolor=blue,urlcolor=black]{hyperref}

\newtheorem{definition}{Definition}[section]
\newtheorem{theorem}[definition]{Theorem}
\newtheorem{lemma}[definition]{Lemma}
\newtheorem{proposition}[definition]{Proposition}

\newcommand{\divergence}{\mathop{\nabla \cdot}}

\numberwithin{equation}{section}

\title{Stochastic Keller–Segel System with Porous Medium Diffusion and Nonlinear Chemotactic Sensitivity}
\author{
	Yiming Jiang$^{\mathrm{a}}$, Haohang Li$^{\mathrm{b},*}$, Yawei Wei$^{\mathrm{a}}$
	\\[0.4em]
	\small\textit{$^{\mathrm{a}}$School of Mathematical Sciences and LPMC, Nankai University, Tianjin, 300071, China}
	\\
	\small\textit{$^{\mathrm{b}}$School of Statistics and Data Science, Nankai University, Tianjin, 300071, China}
	\\[0.3em]
}

\date{\today}

\begin{document}

\maketitle

\begingroup
\renewcommand{\thefootnote}{\fnsymbol{footnote}}
\footnotetext[1]{Corresponding author. E-mail: \texttt{1120240077@mail.nankai.edu.cn}}	

\endgroup

\begingroup
\renewcommand{\thefootnote}{}
\footnotetext{\textit{Email addresses:} 
	\texttt{ymjiangnk@nankai.edu.cn} (Yiming Jiang),
	\texttt{1120240077@mail.nankai.edu.cn} (Haohang Li),
	\texttt{weiyawei@nankai.edu.cn} (Yawei Wei)}
\addtocounter{footnote}{-1}
\endgroup

\begin{abstract}
	In this paper, we investigate a stochastic Keller--Segel system with porous medium diffusion and nonlinear chemotactic sensitivity on a bounded one-dimensional domain. The model describes cell aggregation in complex environments, where the dispersal of cells is governed by density-dependent diffusion $\Delta u^{[m]}$, reflecting the combined effects of porous media and population crowding, and the perception of chemoattractants follows Stevens' power law, leading to the nonlinear chemotactic sensitivity $\nabla\cdot(u\nabla v^{[a]})$. In addition, random environmental fluctuations are incorporated through multiplicative noise $u\,dW(t)$, which represents stochastic perturbations in population dynamics. For $a\geq1$ and $m\geq2a+1$, we establish the global existence of martingale solutions, uniform a priori estimates, and preservation of non-negativity. The condition $m\geq2a+1$ reveals a balance between nonlinear chemotactic aggregation and porous-medium diffusion: stronger sensing response requires stronger diffusion to prevent excessive aggregation. The proof combines a decoupled auxiliary system, energy estimates, and a stochastic Schauder--Tychonoff fixed point argument to overcome the difficulties caused by degenerate diffusion, nonlinear drift, and stochastic perturbations.
\end{abstract}

\textbf{Keywords}: Stochastic partial differential equations; Keller--Segel system; porous medium equation; chemotaxis; martingale solutions

\textbf{MSC 2020}: 60H15, 35K65, 35Q92, 92C17.
\vspace{2mm}

\section{Introduction}

In this paper, we obtain the global existence of the following stochastic Keller--Segel system with porous medium diffusion and nonlinear chemotactic sensitivity:

\begin{equation}\label{system}
	\begin{cases}
		du = \big(\Delta (u^{[m]})  - \chi \nabla \cdot (u \nabla v^{[a]}) \big) dt + u \, dW(t), & x\in \mathcal{O}, \ t \in (0,T], \\
		-\Delta v = u - v, & x\in \mathcal{O},\\
		\frac{\partial u}{\partial n} = \frac{\partial v}{\partial n} = 0, & x\in \partial\mathcal{O}, \\
		u(0, x) = u_0(x),
	\end{cases}
\end{equation}
where  $u$ denotes the density of the cells,  $v$ is the concentration of the chemical substances, $\chi > 0$ is the chemotactic sensitivity and $m, a \geq 1$ are parameters, and $\mathcal{O}$ is a bounded domain in $ \mathbb{R}$ with a smooth boundary $\partial \mathcal{O}$. Here, we adopt the notation $u^{[m]} = |u|^{m-1} u$. The term $ \Delta u^{[m]}  $ models the diffusion of bacteria through a porous medium. According to Stevens' power law in psychophysics \citep{stevens1957psychophysical,steingrimsson2006empirical}, the perceived intensity (or subjective magnitude) $\Psi(I)$ of a sensation is related to the physical intensity $I$ of the stimulus by a power function:
$$\Psi(I) = k I^{a},$$
where $k>0$ is a scaling constant that depends on the units of measurement, and $a\geq 1$ is an exponent that varies by sensory modality. Although the chemoattractant concentration is modeled by the variable $v$, the perceived chemical signal by the cells may not be linear. We model the perceived chemical stimulus by a power-law transformation of the chemical concentration. This leads to the nonlinear chemotactic flux $\nabla\cdot(u\nabla(v^{[a]}))$, where the exponent $a$ reflects the sensory response characteristic of the organism.

The Keller--Segel model is a fundamental framework for describing the collective movement of organisms such as bacteria and amoebae under the gradient of chemical signal concentration. It was initially proposed by \cite{keller1970initiation}.  \cite{patlak1953random} explained the aggregation of Dictyostelium discoideum from the perspective of pattern formation. Generally, the classic version of this model only considers linear diffusion and linear chemotactic effects. However, in reality, there is a need for more realistic descriptions of the model, including volume-filling effects and nonlinear response effects. For instance, when organisms live in porous media, the conditions for linear diffusion are not met, and chemotactic sensitivity is not linear in the perception of signal.  \cite{sugiyama2006global} obtained results on global existence and decay for deterministic Keller–Segel systems with porous medium diffusion $\Delta u^m$ and nonlinear chemotactic sensitivity $\nabla \cdot (u^{q-1}\nabla v)$, showing that degenerate diffusion can prevent blow-up. For nonlinear chemotactic sensitivity, \cite{aguilera2023global} proved global stability for a hyperbolic chemotaxis model with logarithmic sensitivity $\nabla\cdot (u\nabla \log v)$ under time-dependent boundary conditions, demonstrating that solutions converge to equilibrium as $t\rightarrow \infty$ without smallness restrictions on initial data. 

In recent years, the study of stochastic Keller--Segel models incorporating random environmental effects and external forces with porous medium diffusion has attracted significant research attention (see, e.g., \cite{efendiev2011well,huang2020time}). As a macroscopic model derived from the limit of microscopic dynamics (see \cite{stevens2000derivation}), the Keller--Segel model is based on fundamental conservation laws and Fick's law of diffusion; this may lead to the neglect of important features in underlying microscopic kinetics, such as molecular fluctuations and deviations from the mean value. Random perturbations and environmental noise are inherent in natural systems, interacting nonlinearly with the system to fundamentally change its dynamics. To construct a more realistic model, it is natural to incorporate stochastic perturbations arising from environmental fluctuations. Barbu, Da Prato, and R\"ockner's recent monograph \cite{barbu2016stochastic} focuses on the stochastic porous medium equation, and several key results from this book are used in the present work. \cite{dareiotis2021porous} proved the solvability of the one-dimensional stochastic porous medium equation with space-time white noise, while \cite{dareiotis2019ergodicity} analyzed the long-term behavior of solutions to this equation.

The present work is motivated by the interplay between stochastic environmental fluctuations, porous-medium diffusion, and nonlinear biological sensing. In the model \eqref{system}, the exponent $a$ describes the nonlinear response of cells to the perceived chemical signal, while $m$ characterizes the strength of density-dependent diffusion. Our result shows that when $m\geq2a+1$, the diffusive effect is sufficiently strong to prevent excessive aggregation induced by nonlinear chemotactic perception. From a biological perspective, this indicates that in fluctuating porous environments, enhanced sensitivity to chemical signals requires stronger dispersal effects to maintain population stability. The analysis of this model requires a refined treatment of the nonlinear coupling between degenerate diffusion, nonlinear chemotactic drift, and stochastic perturbations. In particular, the nonlinear interaction between the power-law chemotactic response and the porous-medium structure prevents a direct application of standard variational SPDE method. To overcome this difficulty, we introduce a decoupled auxiliary system, establish suitable uniform estimates for the nonlinear drift term, and construct solutions through a stochastic Schauder--Tychonoff fixed point argument. Compared with the stochastic chemotaxis model in \cite{mukherjee2025martingale}, where the chemotactic sensitivity is different, and the stochastic Keller--Segel system with linear chemotactic sensitivity considered by \cite{wang2025global}, the present work incorporates a power-law perception effect and leads to the new threshold condition $m\geq 2a+1$.

The rest of the paper is organized as follows. Section 2 introduces the notation, functional setting, and preliminary estimates used throughout the paper. Section 3 is devoted to the proof of global existence. In Section 4, we establish the preservation of non-negativity of the cell density. Finally, Section 5 concludes the paper.

\section{Preliminaries}

In this section, we introduce the functional framework, stochastic setting, and analytical tools used throughout the paper. We also formulate the necessary assumptions on the noise.

\begin{definition}[Bessel Potential Space]\label{not:bessel}
	Let $1 \leq p < \infty$ and $s \in \mathbb{R}$. The Bessel potential space (or Sobolev space of fractional order) $H_p^s(\mathbb{R}^d)$ is defined by
	\[
	H_p^s(\mathbb{R}^d) := \left\{ f \in \mathcal{S}'(\mathbb{R}^d) : \|f\|_{H_p^s} := \left\| \left( (1 + |\xi|^2)^{\frac{s}{2}} \hat{f} \right)^\vee \right\|_{L^p} < \infty \right\},
	\]
	where $\hat{f}$ and $f^\vee$ denote the Fourier transform and its inverse, respectively. For the domain $\mathcal{O}$, the space $H_p^s(\mathcal{O})$ is defined as the restriction of $H_p^s(\mathbb{R}^d)$ to $\mathcal{O}$ in the sense of distributions, equipped with the norm
	\[
	\| f \|_{H_p^s(\mathcal{O})} := \inf \left\{ \| g \|_{H_p^s(\mathbb{R}^d)} : g \in H_p^s(\mathbb{R}^d),\; g|_{\mathcal{O}} = f \right\}.
	\]
\end{definition}
Let $\mathcal O\subset\mathbb R^d$ be a bounded domain with smooth boundary. We formulate the problem within the variational framework based on a Gelfand triple $(V, H, V^*)$. Specifically, we choose $V := L^{m+1}(\mathcal{O})$, $H := H_2^{-1}(\mathcal{O})$, and $U := L^2(\mathcal{O})$. Identifying $H$ with its dual $H^*$ via the Riesz isomorphism, and using the continuous embeddings $H_0^{1,2}(\mathcal{O}) \subset L^2(\mathcal{O}) \subset L^{\frac{m+1}{m}}(\mathcal{O})$, we obtain the required Gelfand triple:
\begin{equation}\label{setting:Gelfand_triple_intro}
	V = L^{m+1}(\mathcal{O}) \subset H_2^{-1}(\mathcal{O}) \cong \left(H_0^{1,2}(\mathcal{O})\right)^* \subset \left(L^{m+1}(\mathcal{O})\right)^* = V^*.
\end{equation}

We recall a fundamental property regarding the Laplacian operator in our functional setting.

\begin{lemma}[\cite{liu2015stochastic}, Lemma 4.1.12]\label{lem:laplace_iso}
	The map $-\Delta : H_0^{1,2}(\mathcal{O}) \to H_2^{-1}(\mathcal{O})$ is an isometric isomorphism. In particular,
	\begin{equation*}
		\langle -\Delta u, -\Delta v \rangle_{H_2^{-1}} = \langle u, v \rangle_{H_0^{1,2}} \quad \text{for all } u,v \in H_0^{1,2}(\mathcal{O}). 
	\end{equation*}
	Furthermore, $(-\Delta)^{-1} : H^{-1}_2(\mathcal{O}) \to H_0^{1,2}(\mathcal{O})$ serves as the Riesz map for $H$, such that for every $x \in H$,
	\begin{equation*}
		\langle x, \cdot \rangle_{H^{-1}_2} = \langle (-\Delta)^{-1} x, \cdot \rangle_{H_0^{1,2}}.
	\end{equation*}
\end{lemma}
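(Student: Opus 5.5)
The plan is to work with the standard definitions: $H_0^{1,2}(\mathcal O)$ carries the inner product $\langle u,v\rangle_{H_0^{1,2}}=\int_{\mathcal O}\nabla u\cdot\nabla v\,dx$, which is equivalent to the full Sobolev inner product by Poincaré's inequality since $\mathcal O$ is bounded. The space $H_2^{-1}(\mathcal O)$ is identified with the dual $(H_0^{1,2}(\mathcal O))^*$ with the dual norm. The operator $-\Delta$ is understood weakly: for $u\in H_0^{1,2}$, we set $(-\Delta u)(\varphi):=\int_{\mathcal O}\nabla u\cdot\nabla\varphi\,dx$ for $\varphi\in H_0^{1,2}$. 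With these conventions, $-\Delta$ is exactly the canonical map $u\mapsto\langle u,\cdot\rangle_{H_0^{1,2}}$ from the Hilbert space $H_0^{1,2}$ into its dual. The whole statement should then follow from the Riesz representation theorem.

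\textbf{Step 1 (isometry and surjectivity).} By Cauchy--Schwarz, $|(-\Delta u)(\varphi)|\le\|u\|_{H_0^{1,2}}\|\varphi\|_{H_0^{1,2}}$, and the test function $\varphi=u$ attains equality. Hence $\|-\Delta u\|_{H_2^{-1}}=\|u\|_{H_0^{1,2}}$, so $-\Delta$ is a linear isometry and in particular injective. Surjectivity is the Riesz representation theorem (equivalently, Lax--Milgram for the coercive form $\int\nabla u\cdot\nabla\varphi$): every $\ell\in(H_0^{1,2})^*$ has the form $\langle u,\cdot\rangle_{H_0^{1,2}}$ for a unique $u$. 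So $-\Delta$ is an isometric isomorphism.

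\textbf{Step 2 (inner products).} Since a linear isometry between Hilbert spaces preserves inner products, by polarization, we get $\langle-\Delta u,-\Delta v\rangle_{H_2^{-1}}=\langle u,v\rangle_{H_0^{1,2}}$. This is where the Hilbert structure of $H_2^{-1}$ enters: it is the dual-norm structure, which is itself a Hilbert norm by the parallelogram law transported through the Riesz map.

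\textbf{Step 3 (Riesz map).} Fix $x\in H$ and put $w:=(-\Delta)^{-1}x\in H_0^{1,2}$. Let $y\in H$ be arbitrary and write $y=-\Delta z$ with $z\in H_0^{1,2}$. Step 2 gives
\[
\langle x,y\rangle_{H_2^{-1}}=\langle -\Delta w,-\Delta z\rangle_{H_2^{-1}}=\langle w,z\rangle_{H_0^{1,2}}=(-\Delta w)(z)=x(z),
\]
so the functional $\langle x,\cdot\rangle_H$, read as acting on $H_0^{1,2}$ through the duality, coincides with $\langle(-\Delta)^{-1}x,\cdot\rangle_{H_0^{1,2}}$. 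Thus $(-\Delta)^{-1}$ is the Riesz map of $H$ in the stated sense.

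The main obstacle is not analytic but one of consistent identification. One must fix the inner product on $H_0^{1,2}$ (the gradient form, not the full $H^1$ one) so that the isometry is exact rather than merely an equivalence. One must also be careful about the two identifications, $H\cong H^*$ and $H\cong(H_0^{1,2})^*$, when interpreting the last display. A further subtlety is that the paper imposes Neumann conditions elsewhere, whereas the lemma as stated lives in the Dirichlet space $H_0^{1,2}$. Here I would simply prove the Dirichlet statement as written, since Poincaré's inequality, which requires the zero trace, is exactly what makes the gradient form a norm.
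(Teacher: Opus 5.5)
Your proof is correct and is the standard Riesz-representation argument behind the cited result of Liu and R\"ockner (Lemma 4.1.12). The paper quotes that result without proof. Your choice of the gradient inner product on $H_0^{1,2}(\mathcal O)$, rather than the Bessel-potential norm of Definition~\ref{not:bessel}, and of the Dirichlet rather than the Neumann setting, is exactly what makes the isometry exact.

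One small addition: the form actually used in \eqref{dual:V*toV} is $\langle x,y\rangle_{H}=y\big((-\Delta)^{-1}x\big)$. It follows from your Step 3 identity $\langle x,y\rangle_{H}=x\big((-\Delta)^{-1}y\big)$ by symmetry of the inner product.
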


Using Lemma \ref{lem:laplace_iso}, for $u \in H$ and $w \in V$, the duality pairing can be represented via the inverse Laplacian:
\begin{equation}\label{dual:V*toV}
	\begin{aligned}
		_{V^*} \langle u,w \rangle_V = \langle u,w\rangle_{H} 
		= \langle (-\Delta)^{-1}u, (-\Delta)^{-1}w\rangle_{H_0^{1,2}(\mathcal{O})}
		= \int_{\mathcal{O}} \big((-\Delta)^{-1}u(x)\big) w(x) \, dx. 
	\end{aligned}
\end{equation}

For the stochastic framework, let $\mathfrak{A} = (\Omega, \mathcal{F}, \{\mathcal{F}_t\}_{t \geq 0}, \mathbb{P})$ be a complete probability space equipped with a right-continuous filtration. Let $W(t)$ be a cylindrical Wiener process on $U = L^2(\mathcal{O})$. 

A bounded linear operator $T: U \to H$ is called a Hilbert--Schmidt operator if $\sum_{k=1}^\infty \|T \psi_k\|_H^2 < \infty$ for any orthonormal basis $\{\psi_k\}_{k\in\mathbb{N}}$ of $U$. The space of all such operators is denoted by $L_2(U,H)$, equipped with the norm $\|T\|_{L_2(U,H)}^2 := \sum_{k=1}^{\infty} \|T \psi_k\|_H^2$.

The cylindrical Wiener process $W(t)$ can be formally expanded as $W(t) = \sum_{k=1}^\infty \mu_k \psi_{k} \beta_k(t)$, where $\{\beta_k\}_{k \in \mathbb{N}}$ is a sequence of mutually independent standard one-dimensional Brownian motions, and $\{\mu_k\}_{k \in \mathbb{N}}$ are positive coefficients. We choose $\{\psi_{k}\}_{k \in \mathbb{N}}$ to be the eigenfunctions of $-\Delta$ under Dirichlet or Neumann boundary conditions, with corresponding eigenvalues $\{\lambda_k\}_{k \in \mathbb{N}}$. 

By elliptic regularity, $\{\psi_k\}_{k \in \mathbb{N}} \subset H^2(\mathcal{O})$. Since $d =1$, the Sobolev embedding yields an absolute constant $C>0$ such that 
\[\|\psi_{k}\|_{L^\infty(\mathcal{O})} \leq C \|\psi_{k}\|_{H^{2}(\mathcal{O})} \leq C\|\Delta\psi_k\|_{L^2(\mathcal{O})} \leq C \lambda_k.\]
Consequently, $\|u\psi_{k}\|_{H^{-1}_2(\mathcal{O})}^2 \leq C^2 \lambda_k^2 \|u\|_{H^{-1}_2(\mathcal{O})}^2$.
Throughout this paper, we impose the following summability condition on the noise coefficients:
\begin{equation}\label{assum:noise}
	\sum_{k=1}^{\infty} \mu_k^2 \lambda_k^2 := C_1 < \infty.
\end{equation}
Under assumption \eqref{assum:noise}, for $u \in L^{m+1}(\mathcal{O}) \subset H_2^{-1}(\mathcal{O})$, the Hilbert--Schmidt norm of $B(u) = u$ is well-defined and bounded:
\begin{equation}
	\|B(u)\|_{L_2(U,H)}^2 = \sum_{k=1}^{\infty} \|\mu_k u \psi_{k} \|_{H_2^{-1}(\mathcal{O})}^2 \leq C_1 \|u\|_{H_2^{-1}(\mathcal{O})}^2 \leq C\|u\|_{L^{m+1}(\mathcal{O})}^2.
\end{equation}
Thus, $B(u) \in L_2(U, H)$ and the stochastic integral is rigorously defined. We can now formulate the  concept of martingale solutions to the SPDE \eqref{system}.
\begin{definition}[Martingale Solution]
	We say that a tuple $(\mathfrak{A}, W(t), (u,v))$ is a martingale solution to the system \eqref{system} if 
	\begin{enumerate}[label=(\roman*)]
		\item $\mathfrak{A} = (\Omega, \mathcal{F}, \{\mathcal{F}_t\}_{t \geq 0}, \mathbb{P})$ is a probability space with a complete, right-continuous filtration;
		\item $W(t)$ is a cylindrical Wiener process on $L^2(\mathcal{O})$ with respect to $\mathfrak{A}$;
		\item $u: [0,T] \times \Omega \to H_2^{-1}(\mathcal{O})$ is an $\mathcal{F}_t$-progressively measurable process such that $(u,v)$ satisfies system \eqref{system} in the weak sense over $\mathfrak{A}$.
	\end{enumerate}
\end{definition}
To construct solutions to the truncated system in the variational framework, we will use the following standard existence and uniqueness result for stochastic evolution equations. The theorem provides a general criterion based on the Gelfand triple.
\begin{lemma}[\cite{liu2015stochastic}, Theorem 5.1.3] \label{lem:existence_uniqueness}
	
	Consider the stochastic equation
	\begin{equation} \label{eq:spde}
		{d}X(t) = A(t, X(t)) \, {d}t + B(t, X(t)) \, {d}W(t),
	\end{equation}
	where for some fixed time $T > 0$, the mappings
	\begin{equation*}
		A : [0, T] \times V \times \Omega \to V^*, \quad B : [0, T] \times V \times \Omega \to L_2(U, H)
	\end{equation*}
	are progressively measurable. Suppose that there exist constants $\alpha \in (1, \infty)$, $\beta \in [0, \infty)$, $\theta \in (0, \infty)$, $C_0 \in \mathbb{R}$ and a nonnegative adapted process $f \in L^{p/2}([0, T] \times \Omega; \mathrm{d}t \otimes P)$ for some $p \geq \beta + 2$. Assume that the following conditions hold for all $u, v, w \in V$ and $(t, \omega) \in [0, T] \times \Omega$:
	\begin{itemize}
		\item[(H1)] \emph{(Hemicontinuity)} The map $\lambda \mapsto {}_{V^*}\langle A(t, u + \lambda v), w \rangle_V$ is continuous on $\mathbb{R}$.
		
		\item[(H2)] \emph{(Local monotonicity)} 
		\begin{equation*}
			2 \, {}_{V^*}\langle A(t, u) - A(t, v), u - v \rangle_V + \|B(t, u) - B(t, v)\|_{L_2(U,H)}^2 \leq (f(t) + \rho(v))\|u - v\|_H^2,
		\end{equation*}
		where $\rho : V \to [0, \infty)$ is a measurable hemicontinuous function and locally bounded in $V$.
		
		\item[(H3)] \emph{(Coercivity)} 
		\begin{equation*}
			2 \, {}_{V^*}\langle A(t, v), v \rangle_V + \|B(t, v)\|_{L_2(U,H)}^2 \leq C_0\|v\|_H^2 - \theta\|v\|_V^\alpha + f(t).
		\end{equation*}
		
		\item[(H4)] \emph{(Growth)} 
		\begin{equation*}
			\|A(t, v)\|_{V^*}^{\frac{\alpha}{\alpha-1}} \leq (f(t) + C_0\|v\|_V^\alpha)(1 + \|v\|_H^\beta).
		\end{equation*}
	\end{itemize}
	Then for any $X_0 \in L^p(\Omega; H)$, equation \eqref{eq:spde} has a unique solution $X(t)$ such that $X(0) = X_0$, and it satisfies
	\begin{equation*}
		\mathbb{E} \left( \sup_{t \in [0, T]} \|X(t)\|_H^p \right) < \infty.
	\end{equation*}
\end{lemma}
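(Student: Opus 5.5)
This is a cited result, so I would reconstruct the standard proof of Liu--Röckner: a Galerkin scheme, uniform a priori bounds from (H3)--(H4), weak compactness, and identification of the nonlinear limit by a monotonicity (Minty-type) argument. Uniqueness follows from Itô's formula for $\|X-Y\|_H^2$. Throughout I take $V\subset H$ separable and reflexive with dense, continuous embedding. The Galerkin basis is $\{e_i\}\subset V$, orthonormal in $H$, with $H_n=\mathrm{span}\{e_1,\dots,e_n\}$, orthogonal projection $P_n:V^*\to H_n$ extending the $H$-projection, and $W^{(n)}=\tilde P_n W$ a finite-dimensional projection of the noise. I would argue as follows.

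\emph{Step 1: finite-dimensional problems.} Consider $dX_n=P_nA(t,X_n)\,dt+P_nB(t,X_n)\,dW^{(n)}$ with $X_n(0)=P_nX_0$. The coefficients are continuous on $H_n$ by (H1), since monotone hemicontinuous maps are demicontinuous and this becomes continuity in finite dimensions. They are locally weakly monotone by (H2) and weakly coercive by (H3). A Krylov-type theorem for finite-dimensional SDEs with these properties gives a unique global strong solution.

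\emph{Step 2: uniform estimates.} Apply Itô's formula to $\|X_n\|_H^p$ and use (H3), with the BDG inequality for the martingale term and Gronwall. This gives
\[
\mathbb E\sup_{t\le T}\|X_n(t)\|_H^p+\mathbb E\int_0^T\|X_n\|_V^\alpha\,dt\le C
\]
uniformly in $n$, where the condition $p\ge\beta+2$ is needed. Combining this with (H4) and Hölder's inequality bounds $A(\cdot,X_n)$ uniformly in $L^{\alpha/(\alpha-1)}([0,T]\times\Omega;V^*)$; this uses the factor $(1+\|v\|_H^\beta)$ together with the $L^p$-moment of $\sup\|X_n\|_H$. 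The map $B(\cdot,X_n)$ is bounded in $L^2([0,T]\times\Omega;L_2(U,H))$. Reflexivity then gives a subsequence with $X_n\rightharpoonup \bar X$ in $L^\alpha(V)$, $A(\cdot,X_n)\rightharpoonup Y$, and $B(\cdot,X_n)\rightharpoonup Z$. Passing to the limit in the weak form yields $X=X_0+\int Y\,ds+\int Z\,dW$, with $X=\bar X$ a.e., and Itô's formula in the Gelfand triple gives continuity in $H$.

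\emph{Step 3: identification $Y=A(\cdot,X)$ and $Z=B(\cdot,X)$.} This is the main obstacle, because (H2) is only locally monotone. The constant $f$ is replaced by $f+\rho(v)$, so the classical exponential weight $e^{-ct}$ does not suffice. I would introduce the weight $\phi(t)=\exp\bigl(-\int_0^t (f(s)+\rho(\psi(s)))\,ds\bigr)$ for test processes $\psi\in L^\alpha(V)\cap L^\infty(H)$. Then I would compare Itô's formula for $\phi\|X_n\|_H^2$ with that for $\phi\|X\|_H^2$, using lower semicontinuity of the norm under weak convergence. This yields
\[
\mathbb E\!\int_0^T\!\phi\Bigl(2\langle Y-A(\psi),X-\psi\rangle+\|Z-B(\psi)\|_{L_2}^2-(f+\rho(\psi))\|X-\psi\|_H^2\Bigr)dt\le0 .
\]
Taking $\psi=X$ gives $Z=B(X)$; this requires local boundedness of $\rho$ and a localization by stopping times so that $\int\rho(X)\,ds<\infty$. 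Next I would take $\psi=X-\varepsilon\tilde\phi v$, divide by $\varepsilon$, and let $\varepsilon\to0$ using the hemicontinuity (H1) of $A$ and of $\rho$. This gives $Y=A(\cdot,X)$. The delicate point is making these moment and integrability manipulations legitimate when $\rho(\psi)$ appears in the exponent, and I would handle it with cut-offs and dominated convergence.

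\emph{Step 4: uniqueness and the moment bound.} For two solutions $X,\tilde X$, apply Itô's formula to $e^{-\int_0^t(f+\rho(\tilde X))ds}\|X-\tilde X\|_H^2$. By (H2) the drift is nonpositive, so the expectation vanishes up to stopping times, which gives $X=\tilde X$. The bound $\mathbb E\sup_t\|X\|_H^p<\infty$ follows by weak lower semicontinuity from the uniform estimates of Step 2, or directly by repeating the Itô/BDG argument for $X$.
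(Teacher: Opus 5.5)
Your outline (Galerkin scheme, a priori bounds in which $p\ge\beta+2$ controls $A(\cdot,X_n)$ in $L^{\alpha/(\alpha-1)}(V^*)$, weak limits, the monotonicity trick with weight $\exp(-\int_0^t(f+\rho(\psi))\,ds)$ used first with $\psi=X$ and then with $\psi=X-\varepsilon\tilde\phi v$, and the weighted It\^o formula for uniqueness) is the Liu--R\"ockner argument that the paper cites without proof. There is, however, a genuine gap in Steps 3 and 4. Those steps need $\int_0^T\rho(\psi(s))\,ds<\infty$ a.s.\ for $\psi=X$, $X-\varepsilon\tilde\phi v$ and $\tilde X$. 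Step 3 also needs an integrable majorant for $\rho(X-\varepsilon\tilde\phi v)$ as $\varepsilon\to0$. If the integral diverges, the weight is $0$ for every $t>0$, and both the identification inequality and the uniqueness identity reduce to $0\le 0$. Local boundedness of $\rho$ on bounded subsets of $V$ does not give this finiteness, because $X$ lies only in $L^\alpha(0,T;V)$. For instance, $\rho(v)=\exp(\|v\|_V^\alpha)$ is continuous and locally bounded, yet $\int_0^t\rho(X(s))\,ds$ can be infinite for every $t>0$. Then $\tau_R=\inf\{t:\int_0^t\rho(X)\,ds\ge R\}=0$, and you cannot stop on $\|X(t)\|_V$ either, since it is not continuous in $t$. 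So the stopping-time localization you invoke does not exist in general.

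Step 2 has a similar problem for $p>2$. It\^o's formula for $\|X_n\|_H^p$ produces the correction term $\tfrac{p(p-2)}{2}\|X_n\|_H^{p-4}\|B(X_n)^*X_n\|_U^2$, and BDG leaves $\mathbb{E}\int_0^T\|X_n\|_H^{p-2}\|B(X_n)\|_{L_2(U,H)}^2\,dt$. But (H3) only bounds the sum $2\langle A(v),v\rangle+\|B(v)\|_{L_2(U,H)}^2$, not $\|B(v)\|^2_{L_2(U,H)}$ alone. Bounding $\|B\|^2$ separately through (H3)--(H4) produces $\|X_n\|_V^\alpha$ times extra powers of $\|X_n\|_H$, with a constant unrelated to $\theta$, so the estimate does not close.

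Both defects come from two hypotheses of Liu--R\"ockner's Theorem 5.1.3 that the lemma as stated in the paper omits: $\|B(t,v)\|_{L_2(U,H)}^2\le C(f(t)+\|v\|_H^2)$ and $\rho(v)\le C(1+\|v\|_V^\alpha)(1+\|v\|_H^\beta)$. The first closes the $p$-th moment bound. The second gives $\int_0^T\rho(X)\,ds\le C(1+\sup_t\|X(t)\|_H^\beta)\int_0^T(1+\|X\|_V^\alpha)\,ds<\infty$ a.s., together with the dominating function for the $\varepsilon\to0$ limit. Both hold trivially in the paper's application ($B(u)=u$ with $\|B(u)\|_{L_2(U,H)}^2\le C\|u\|_H^2$, and $\rho\equiv0$). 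With them added to the hypotheses, your argument goes through and coincides with the cited proof.
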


\section{Global existence}

This section is devoted to the proof of the global existence result. We first construct a solution operator associated with a decoupled auxiliary system and prove that it satisfies the assumptions of the stochastic Schauder--Tychonoff fixed point theorem. The existence of a fixed point then yields a martingale solution to the original system. The verification of the required properties is divided into three steps: the operator maps a bounded set into itself, is continuous, and is compact.

\begin{theorem}\label{theorem1}
	Assume $a \geq 1$ and $m \geq 2a+1$. Let the initial data $u_0 \in H^{-1}_2(\mathcal{O})$ satisfy $\mathbb{E}\big[\|u_0\|_{L^{m+1}}^{m+1}\big] < \infty$. Then, for any finite time horizon $T>0$, there exists a global martingale solution $(\mathfrak{A}, W(t), (u,v))$ to the system \eqref{system}. Moreover, there exists a constant $C>0$, depending only on the initial data and parameters, such that
	\begin{equation}\label{est:thm_1}
		\mathbb{E}\left[ \sup_{t \in [0,T]} \|u(t)\|_{H_2^{-1}}^{2} + 4\int_{0}^{T} \| u(t)\|_{L^{m+1}}^{m+1} \, dt \right] \leq C,
	\end{equation}
	and
	\begin{equation}\label{est:thm_2}
		\mathbb{E}\left[ \sup_{t \in [0,T]} \|u(t)\|_{L^{m+1}}^{m+1} + \int_{0}^{T} m^2(m+1)\big\| |u(t)|^{m-1} \nabla u(t)\big\|_{L^2}^2 \, dt \right] \leq C.
	\end{equation}
\end{theorem}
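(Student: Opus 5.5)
We follow the strategy announced at the start of this section: decouple the system, solve the linearized problem with Lemma \ref{lem:existence_uniqueness}, and close with a stochastic Schauder--Tychonoff fixed point.

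\textbf{Decoupled problem.} Fix a progressively measurable process $\bar u$ in a closed convex set
\[
\mathcal{X}_R=\Big\{\bar u:\ \mathbb{E}\Big[\sup_{t\le T}\|\bar u\|_{H_2^{-1}}^2+\int_0^T\|\bar u\|_{L^{m+1}}^{m+1}dt\Big]\le R\Big\},
\]
viewed as a subset of $L^{m+1}(\Omega\times[0,T]\times\mathcal O)$ with a weak topology. Let $\bar v=(I-\Delta)^{-1}\bar u$ with Neumann data. Since $d=1$, elliptic regularity gives
\[
\|\nabla\bar v\|_{L^\infty}\le C\|\bar u\|_{L^{1}},\qquad \|\bar v\|_{L^\infty}\le C\|\bar u\|_{L^1},
\]
and hence $\|\nabla \bar v^{[a]}\|_{L^\infty}\le C\|\bar u\|_{L^1}^{a}$. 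To keep the drift well behaved, insert a truncation $h_N(\|\bar u\|)$ in front of $\chi$.

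Now solve
\[
du=\big(\Delta u^{[m]}-\chi\nabla\cdot(u\nabla\bar v^{[a]})\big)dt+u\,dW
\]
in the Gelfand triple \eqref{setting:Gelfand_triple_intro}. The conditions (H1)--(H4) of Lemma \ref{lem:existence_uniqueness} are checked in the standard way for the porous medium operator, with $\alpha=m+1$. The new drift term is linear in $u$ with a bounded coefficient. Its $V^*$ pairing, via \eqref{dual:V*toV}, is
\[
\int u\,\nabla\bar v^{[a]}\cdot\nabla(-\Delta)^{-1}w\,dx,
\]
which is controlled by $\|\nabla\bar v^{[a]}\|_\infty\|u\|_{L^2}\|w\|_{H^{-1}}$. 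Interpolating $L^2$ between $H^{-1}$ and $L^{m+1}$ and applying Young's inequality, this is absorbed into $\theta\|u\|_V^{m+1}$ up to $C\|u\|_H^2$. The noise term is handled by the computation following \eqref{assum:noise}. This yields a unique solution $u=\mathcal T(\bar u)$.

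\textbf{A priori estimates.} First apply Itô's formula to $\|u\|_{H^{-1}}^2$. Together with BDG and Gronwall, this gives \eqref{est:thm_1} with the $4\int\|u\|_{L^{m+1}}^{m+1}$ term. Second, apply Itô to $\|u\|_{L^{m+1}}^{m+1}$; this is justified by a Krylov-type Itô formula, or by Galerkin or regularization. The diffusion produces the dissipation term
\[
m(m+1)\int |u|^{2m-2}|\nabla u|^2 .
\]
The chemotaxis term gives, after integration by parts,
\[
\chi m(m+1)\int |u|^{m-1}u\,\nabla u\cdot\nabla\bar v^{[a]} .
\]
Estimate this term by Cauchy--Schwarz against the dissipation, leaving
\[
C\int |u|^{2}|\nabla \bar v^{[a]}|^2\le C\|\bar u\|_{L^1}^{2a}\|u\|_{L^2}^2 .
\]
The key point is to bound $\|\bar u\|_{L^1}^{2a}\|u\|_{L^2}^2$ by $\|\bar u\|^{m+1}+\|u\|^{m+1}$ through Young's inequality with exponents $\frac{m+1}{2a}$ and $\frac{m+1}{2}$. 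This is admissible precisely when
\[
\frac{2a}{m+1}+\frac{2}{m+1}\le 1,\quad\text{i.e.}\quad m\ge 2a+1 .
\]
The $\|\bar u\|^{m+1}$ contribution is integrable in time by membership in $\mathcal X_R$, and the $\|u\|^{m+1}$ contribution is handled by Gronwall. This gives \eqref{est:thm_2}. Choosing $R$ large, with $T$ small first and then iterating over time intervals, shows that $\mathcal T(\mathcal X_R)\subset\mathcal X_R$. Arranging the constants so this works is the step we expect to be the main obstacle. Since $\bar u$ enters with power $m+1$ only linearly, one may need to put a weight $e^{-\lambda t}$ into the norm of $\mathcal X_R$ to obtain the self-map property.

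\textbf{Continuity, compactness, and conclusion.} For continuity, take $\bar u_n\to\bar u$. Then $\bar v_n^{[a]}\to\bar v^{[a]}$ in $W^{1,\infty}$ in probability, and the monotonicity of the porous medium operator in $H^{-1}$ gives $u_n\to u$.

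For compactness, combine the gradient bound on $u^{m}$ in \eqref{est:thm_2} with a time Hölder estimate in $H^{-2}$ for the drift and stochastic integral. An Aubin--Lions type criterion then gives tightness in $L^{m+1}(0,T;L^{m+1})\cap C([0,T];H^{-3})$.

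The stochastic Schauder--Tychonoff theorem, through Skorokhod--Jakubowski representation and identification of the limit martingale, then yields a fixed point $u=\mathcal T(u)$. This fixed point is a martingale solution on a new probability space. Removing the truncation uses the uniform bounds, and the estimates pass to the limit by lower semicontinuity.
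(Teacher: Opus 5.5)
\textbf{There is a genuine gap: the decoupled problem you define is not shown to be well posed.} Your decoupling keeps $u$ inside the chemotactic flux, $-\chi\nabla\cdot(u\nabla\bar v^{[a]})$. This breaks the step you treat as standard, namely the local monotonicity (H2), which is what gives a \emph{unique} solution and hence a single-valued $\mathcal{T}$.

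The interpolation you invoke, $\|u\|_{L^2}\lesssim\|u\|_{H^{-1}_2}^{\theta}\|u\|_{L^{m+1}}^{1-\theta}$ with $\theta>0$, is false, because $L^{m+1}$ carries no positive smoothness. On $(0,\pi)$, $f_n=\cos(nx)$ has $\|f_n\|_{L^2}$ and $\|f_n\|_{L^{m+1}}$ of order one, but $\|f_n\|_{H^{-1}_2}\sim 1/n$.

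With $\theta=0$, the bound $K\|d\|_{L^{m+1}}\|d\|_{H^{-1}_2}$ for $d=u-w$ cannot be absorbed into $\varepsilon\|d\|_{L^{m+1}}^{m+1}+C\|d\|_{H^{-1}_2}^2$. Take $w=0$, $u=\lambda d$, $\lambda\to0$: the porous-medium term is $O(\lambda^{m+1})$, so you would need $\|d\|_{L^{m+1}}\lesssim\|d\|_{H^{-1}_2}$, which the same $f_n$ refute.

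Computing the pairing exactly (one dimension, integration by parts) gives $\langle-\chi\nabla\cdot(d\nabla\bar v^{[a]}),d\rangle_{H^{-1}_2}=\frac{\chi}{2}\int_{\mathcal O}\Delta\bar v^{[a]}\,|\nabla(-\Delta)^{-1}d|^2\,dx$. So (H2) needs $(\Delta\bar v^{[a]})^+\in L^\infty(\mathcal O)$ with an integrable bound in $(t,\omega)$. But $\Delta\bar v^{[a]}$ contains $-a|\bar v|^{a-1}\bar u$, and a signed $\bar u$ that is only in $L^{m+1}$ gives no such bound; the theorem does not assume $u_0\ge0$. A truncation $h_N$ in any norm your class controls bounds $\|\nabla\bar v^{[a]}\|_{L^\infty}$, not $\|\Delta\bar v^{[a]}\|_{L^\infty}$. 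The same obstruction returns in your continuity step: the term $\nabla\cdot((u_n-u)\nabla\bar v_n^{[a]})$ is not controlled by the $H^{-1}$-monotonicity of $\Delta u^{[m]}$.

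The missing idea is to freeze the whole flux, as the paper does:
$du=(\Delta u^{[m]}-\chi\nabla\cdot(\xi\nabla v_\xi^{[a]}))\,dt+u\,dW$.
The chemotactic term is then a $u$-independent forcing. (H2) reduces to monotonicity of $s\mapsto s^{[m]}$ (with $f=C$, $\rho=0$), and the forcing enters only (H3)--(H4), through $f(t)=C\|\xi\nabla v_\xi^{[a]}\|_{L^1}^{(m+1)/m}$.

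Freezing the whole flux also settles the self-map step, which you leave open. The $H^{-1}$ estimate then sees $\xi$ only through $\mathbb{E}\int_0^T\|\xi\|_{L^{m+1}}^{(a+1)(m+1)/m}dt\lesssim T^{(m-a-1)/m}R_1^{(a+1)/m}$. This is sublinear in $R_1$ because $a+1<m$, so a large enough $R_1$ is reproduced. The $L^{m+1}$ estimate produces $\|\xi\|_{L^2}^2\|\nabla v_\xi^{[a]}\|_{L^\infty}^2\lesssim\|\xi\|_{L^{m+1}}^{2a+2}$; this is where $m\ge2a+1$ enters, exactly as in your Young argument. That term is bounded by $TR_1^{(2a+2)/(m+1)}$, so $R_2$ is chosen after $R_1$.

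The price of this choice is that continuity needs $\xi$ itself to converge in $L^2$, not only $v_\xi$. That is why the paper keeps the gradient bound on $|\xi|^{m-1}\nabla\xi$ in the ball and interpolates between $H^{-1}_2$ and $H^\rho_2$, $\rho>0$.

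In your scheme, by contrast, the dependence on $R$ is linear, and your remedies (short time plus iteration, exponential weights) are not carried out; concatenating martingale solutions over time intervals is itself nontrivial. If you instead rely on $h_N$ to make the bounds independent of $R$, removing it takes more than uniform bounds. The fixed points for different $N$ are martingale solutions on different probability spaces, with no pathwise uniqueness to patch them via stopping times. You would need a second tightness and Skorokhod identification as $N\to\infty$.
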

To prove Theorem \ref{theorem1}, we apply a stochastic version of the Schauder--Tychonoff fixed point theorem (as adapted in \cite{mukherjee2025martingale,wang2025global}). The main challenge lies in the nonlinear chemotaxis term. To handle this, we define a space of progressively measurable processes:
\begin{align*}
	\mathbb{X}_{\mathfrak{A}}(R_1,R_2) := \Bigg\{ \xi : [0,T]\times \Omega &\to H^{-1}_2(\mathcal{O}) \ \Big| \ 
	\mathbb{E}\left[ \sup_{t \in [0,T]} \|\xi(t)\|_{H_2^{-1}}^{2} + 4\int_{0}^{T} \| \xi(t)\|_{L^{m+1}}^{m+1} \,dt \right] \leq R_1,\\
	&\mathbb{E}\left[ \sup_{t \in [0,T]} \|\xi(t)\|_{L^{m+1}}^{m+1} + \int_{0}^{T} C_m \| |\xi(t)|^{m-1} \nabla \xi(t)\|_{L^2}^2 \, dt \right] \leq R_2 \Bigg\},
\end{align*}
where $C_m = m^2(m+1)$. For a given $\xi \in \mathbb{X}_{\mathfrak{A}}(R_1,R_2)$, we consider the following decoupled linearised system:
\begin{equation}\label{auxiliary_system}
	\begin{cases}
		du = \big(\Delta (u^{[m]}) - \chi \nabla \cdot (\xi \nabla v^{[a]}) \big)dt + u \,dW(t), \\
		-\Delta v = \xi - v.
	\end{cases}
\end{equation}

By elliptic regularity, we can determine $v$ uniquely from $\xi$ and denote it by $v_\xi$. Using the Sobolev embedding $W^{2,2}(\mathcal{O}) \hookrightarrow W^{1,\infty}(\mathcal{O})$ for $d=1$, we have the a priori estimate:
\begin{equation}\label{ineq:v_infty}
	\|v_\xi\|_{L^\infty} + \|\nabla v_\xi\|_{L^\infty} \leq C\|\xi\|_{L^{2}}.
\end{equation}

\begin{lemma}\label{lemma1}
	Under the assumptions of Theorem \ref{theorem1}, for any given $\xi \in \mathbb{X}_\mathfrak{A}(R_1,R_2)$, there exists a unique solution $u$ to the auxiliary system \eqref{auxiliary_system} satisfying the same bounds as in \eqref{est:thm_1}.
\end{lemma}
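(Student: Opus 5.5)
The plan is to treat \eqref{auxiliary_system} as a stochastic evolution equation of the form \eqref{eq:spde} on the Gelfand triple \eqref{setting:Gelfand_triple_intro}. We take
\[
A(t,u)=\Delta(u^{[m]})-\chi\nabla\cdot\big(\xi(t)\nabla v_{\xi(t)}^{[a]}\big), \qquad B(t,u)=u,
\]
and invoke Lemma \ref{lem:existence_uniqueness} with $\alpha=m+1$ and $p=2$ (so $\beta=0$). The chemotactic term does not depend on $u$; it is a random, progressively measurable forcing $g(t):=-\chi\nabla\cdot(\xi\nabla v_\xi^{[a]})$, adapted because $\xi$ is. The porous medium part is the standard one from \cite{barbu2016stochastic,liu2015stochastic}. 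Using \eqref{dual:V*toV} we have ${}_{V^*}\langle \Delta u^{[m]},w\rangle_V=-\int u^{[m]}w\,dx$. This gives hemicontinuity (H1) directly. It also gives monotonicity, since $\int (u^{[m]}-w^{[m]})(u-w)\ge0$, and $B$ is linear with $\|B(u)-B(w)\|_{L_2}^2\le C_1\|u-w\|_H^2$, so (H2) holds with $f$ constant and $\rho\equiv0$. The growth condition (H4) follows from $\|\Delta u^{[m]}\|_{V^*}\le\|u\|_{L^{m+1}}^m$.

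The forcing enters (H3) and (H4) only through the quantity $\|g(t)\|_{V^*}^{(m+1)/m}$, which must belong to $L^1([0,T]\times\Omega)$. To control it we use \eqref{ineq:v_infty}: $\nabla v_\xi^{[a]}=a|v_\xi|^{a-1}\nabla v_\xi$ is bounded in $L^\infty$ by $C\|\xi\|_{L^2}^a$. Since $g$ is a divergence, we pair it with $(-\Delta)^{-1}w$ and integrate by parts once:
\[
|\langle g,w\rangle_H|\le \chi\|\xi\nabla v_\xi^{[a]}\|_{L^2}\,\|\nabla(-\Delta)^{-1}w\|_{L^2}\le C\|\xi\|_{L^2}^{a+1}\|w\|_{H}.
\]
This bound is sharper than what is needed, as it holds in $H$ rather than only in $V^*$. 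In (H3) we then use Young's inequality,
\[
2\langle g,u\rangle_H\le \|u\|_H^2+C\|\xi\|_{L^2}^{2a+2}.
\]
So the natural choice is $f(t)=C\big(1+\|\xi(t)\|_{L^2}^{2a+2}\big)$, and for (H3) we need $\mathbb{E}\int_0^T\|\xi\|_{L^2}^{2a+2}dt<\infty$. On the bounded one-dimensional domain we have $\|\xi\|_{L^2}\le C\|\xi\|_{L^{m+1}}$ (since $m\ge1$). The condition $m\ge 2a+1$ is exactly $2a+2\le m+1$, so by H\"older and the $R_1$-bound defining $\mathbb{X}_\mathfrak{A}(R_1,R_2)$,
\[
\mathbb{E}\int_0^T\|\xi\|_{L^2}^{2a+2}\,dt\le C\Big(1+\mathbb{E}\int_0^T\|\xi\|_{L^{m+1}}^{m+1}\,dt\Big)\le C(1+R_1).
\]
The growth condition (H4) needs $f$ to dominate $\|g\|_{V^*}^{(m+1)/m}\le C\|\xi\|_{L^2}^{(a+1)(m+1)/m}$. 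Since $(a+1)(m+1)/m\le 2a+2$ when $m\ge1$, this is also covered by $f$ after adding a constant. The coercivity term $-2\int|u|^{m+1}$ supplies $\theta=2$. Lemma \ref{lem:existence_uniqueness} then yields existence and uniqueness, using $u_0\in L^2(\Omega;H)$, which is implied by the $L^{m+1}$ moment assumption.

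For the bound \eqref{est:thm_1} we apply It\^o's formula for $\|u\|_H^2$ in the Gelfand triple:
\[
d\|u\|_H^2=\Big(-2\|u\|_{L^{m+1}}^{m+1}+2\langle g,u\rangle_H+\|u\|_{L_2(U,H)}^2\Big)dt+2\langle u,u\,dW\rangle_H .
\]
We estimate the $g$-term as above and the noise term by $C_1\|u\|_H^2$. We take the supremum in time and then expectations. The martingale term is handled by Burkholder--Davis--Gundy:
\[
\mathbb{E}\sup_t\Big|\int_0^t\cdots\Big|\le C\,\mathbb{E}\Big(\int_0^T\|u\|_H^4\,dt\Big)^{1/2}\le \tfrac12\mathbb{E}\sup_t\|u\|_H^2+C\,\mathbb{E}\int_0^T\|u\|_H^2\,dt .
\]
We absorb the first piece and finish with Gronwall, which gives
\[
\mathbb{E}\Big[\sup_t\|u\|_H^2+c\int_0^T\|u\|_{L^{m+1}}^{m+1}\,dt\Big]\le C\big(\mathbb{E}\|u_0\|_H^2+T+R_1\big).
\]

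The main obstacle is matching the exact form of \eqref{est:thm_1}: the statement has coefficient $4$ on the integral and bound $C$, whereas the computation above naturally produces $2$, possibly reduced by absorption, and a constant growing with $R_1$. Making this consistent with the self-mapping later, so that the bound is at most $R_1$, may require a more careful weighting of constants. I would first rescale the Young inequality so that the $\xi$-contribution carries a small factor $\varepsilon R_1$. Failing that, I would accept that the lemma asserts only a bound of the same form, with the constant depending on $R_1$, and defer the self-mapping calibration to the invariance step.
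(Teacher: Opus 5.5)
Your proposal is correct. It works in the same framework as the paper: Lemma \ref{lem:existence_uniqueness} on the triple $L^{m+1}\subset H^{-1}_2\subset V^*$, with the same $A$ and $B$ and the same verification of (H1) and (H2). It departs from the paper in how the chemotactic forcing is controlled and in how much of the bound is actually proved.

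\textbf{How the forcing is controlled.} The paper pairs the forcing in $V^*\times V$. It uses $L^1(\mathcal O)\hookrightarrow H^{-1}_{\frac{m+1}{m}}(\mathcal O)$ and Young's inequality against the dissipation $\|u\|_{L^{m+1}}^{m+1}$, which gives $f(t)=C\|\xi\nabla v_\xi^{[a]}\|_{L^1}^{\frac{m+1}{m}}\le C\|\xi\|_{L^{m+1}}^{(a+1)\frac{m+1}{m}}$. Integrability of this $f$ only requires $a+1\le m$. You instead integrate by parts once to get $\|g\|_H\le\chi\|\xi\nabla v_\xi^{[a]}\|_{L^2}\le C\|\xi\|_{L^2}^{a+1}$. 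This is cleaner: the forcing becomes an honest $H$-valued process and (H3), (H4) are routine. The cost is the exponent $2a+2$, so you already use the full hypothesis $m\ge 2a+1$ at this stage.

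\textbf{What is proved about the bound.} The paper's proof of this lemma stops at finiteness of $\mathbb{E}[\sup_t\|u\|_H^2+\int_0^T\|u\|_{L^{m+1}}^{m+1}dt]$, as supplied by the abstract theorem. The quantitative estimate is deferred to Step 1 of Proposition \ref{prop:1}. You carry out the It\^o--BDG--Gronwall argument yourself, which proves the lemma as stated with $C=C(u_0,T,R_1)$. Your worry about the coefficient $4$ is only cosmetic: multiply your final inequality by $\max\{1,4/c\}$.

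\textbf{Where your route would fail downstream.} The real trade-off appears in the self-map step. In the borderline case $m=2a+1$, your $H$-pairing produces a contribution linear in $R_1$, with a constant involving $\chi$, $T$ and $e^{CT}$. Rebalancing Young's inequality in $H$ does not remove this. A small factor in front of $\|g\|_H^2$ is paid for by a large factor in front of $\|u\|_H^2$, and hence in the Gronwall exponential. The paper's $V^*$-absorption instead gives $R_1^{\frac{a+1}{m}}$ with $\frac{a+1}{m}<1$. That sublinearity is what allows $R_1$ to be chosen large in Proposition \ref{prop:1}. So your route is fine for this lemma, but the invariance step should use the paper's pairing rather than the rescaling you suggest.
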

Based on Lemma \ref{lemma1}, we can construct a solution operator $\mathcal{T}: \xi \mapsto u$. The existence of a martingale solution is then reduced to proving that $\mathcal T$ admits a fixed point.

\noindent\textbf{Proof of Lemma \ref{lemma1}.}
We show that there exists a solution $u$ to \eqref{auxiliary_system} by Lemma \ref{lem:existence_uniqueness}. Let us consider the Gelfand triple
$$V\subset H \cong H^* \subset V^*,$$
with $H:= H^{-1}_2 (\mathcal{O})$, then its dual space $H^* = H_0^{1,2}(\mathcal{O})$(corresponding to Neumann boundary conditions). Set $V:= L^{m+1}$ and its dual space $V^* = L^{\frac{m+1}{m}}$.

We let 
$$A(u):= \Delta u^{[m]} - \chi \divergence (\xi(t) \nabla v_\xi^{[a]}(t)), \quad B(u):= u,$$
where $v_\xi$ is the solution of the second equation of \eqref{auxiliary_system}, the equation can be rewritten as
\begin{align}
	\label{eq:sde}
	du(t) = A(u)dt + B(u) dW_t.
\end{align}

\noindent Verification of (H1) (Hemicontinuity): Let $u_1,u_2,w\in V,\omega \in \Omega$. We need to show that for $\lambda\in \mathbb{R}, |\lambda| \leq 1$,
$$\lambda \to _{V^*}\langle A(u_1+\lambda u_2),w\rangle_V$$
is continuous on $\mathbb{R}$. Since the map $\lambda \to (u_1 + \lambda u_2)^{[m]}$  is continuous, by the Dominated Convergence Theorem $\lambda \to \int_{\mathcal{O}}(u_1 + \lambda u_2)^{[m]}w dx$ is continuous.  Therefore, it follows from \eqref{dual:V*toV} that \mbox{$\lambda \to _{V^*}\langle A (u_1+\lambda u_2),w\rangle_V$} is continuous.

\noindent Verification of (H$2$) (Local monotonicity): Let $u,w \in V , t\in[0,T],\omega \in \Omega$. Since the function $s \mapsto |s|^{m-1}s $ is monotonically increasing. Then we get
\begin{align*}
	&\quad2\,{}_{V^*}\langle A(u) -A(w), u-w \rangle_V + \|u-w\|^2_{L_2(U,H)} \\
	&= -2 \int_{\mathcal{O}} (|u|^{m-1}u -|w|^{m-1} w)(u-w) dx + \|u-w\|^2_{L_2(L^2(\mathcal{O}) , H_2^{-1}(\mathcal{O}))}\\
	&\leq C\|u-w\|_{H_2^{-1}(\mathcal{O})}^2.
\end{align*}
Hence, $A(u)$ and $B(u)$ satisfy (H2) with $f(t)=C$, $\rho(w)=0$.

\noindent Verification of (H3) (Coercivity): Let $u\in V$, $t\in[0,T], \omega\in\Omega$. Using  Young's inequality with $\varepsilon$ and the Sobolev embedding $L^1{(\mathcal{O})}\hookrightarrow H^{-1}_{\frac{m+1}{m}}(\mathcal{O})$ for $d = 1,2$, we have
\begin{align*}
	&\quad2\,{}_{V^*}\langle A(u) , u \rangle_V + \|B(u)\|^2_{L_2(U,H)}\\
	&=-2\int_{\mathcal{O}} |u|^{m-1}u^2 dx -\chi \langle \divergence(\xi \nabla v^{[a]}_\xi), u \rangle_{H_2^{-1}(\mathcal{O})} + \|u\|^2_{L^2(L^2(\mathcal{O}),H_2^{-1}(\mathcal{O}))}\\
	&\leq -2\int_{\mathcal{O}} |u|^{m-1}u^2 dx + \chi \int_{\mathcal{O}} (-\Delta)^{-1}(\divergence(\xi \nabla v^{[a]}_\xi))  \cdot u dx + \|u\|^2_{L_2(L^2(\mathcal{O}),H_2^{-1}(\mathcal{O}))}\\
	&\leq -2\|u\|_{L^{m+1}(\mathcal{O})}^{m+1} +C_\chi \| (-\Delta)^{-1}(\divergence(\xi \nabla v^{[a]}_\xi)) \|_{L^{\frac{m+1}{m}}(\mathcal{O})}^{\frac{m+1}{m}} + \|u\|_{L^{m+1}(\mathcal{O})}^{m+1} +C \|u\|_{H_2^{-1}(\mathcal{O})}^2\\
	&\leq  -\|u\|_{L^{m+1}(\mathcal{O})}^{m+1} +C_\chi \|\xi\nabla v^{[a]}_\xi\|^{\frac{m+1}{m}}_{H^{-1}_{\frac{m+1}{m}}(\mathcal{O})} +C \|u\|_{H_2^{-1}(\mathcal{O})}^2\\
	&\leq -\|u\|_{L^{m+1}(\mathcal{O})}^{m+1} +C \|\xi \nabla v^{[a]}_\xi\|^{\frac{m+1}{m}}_{L^1(\mathcal{O})} +C \|u\|_{H_2^{-1}(\mathcal{O})}^2.
\end{align*}
We take $\theta=1,\alpha=m+1$ and $f(t)= C\|\xi(t) \nabla v_\xi^{[a]}(t)\|_{L^1(\mathcal{O})}^\frac{m+1}{m} $. It remains to check that $f\in L^1([0,T]\times\Omega,dt\otimes\mathbb{P})$
$$\mathbb{E}\left[\int_{0}^{T}\|\xi(t) \nabla v_\xi^{[a]}(t)\|^{\frac{m+1}{m}}_{L^1}dt\right]\leq C \mathbb{E}\left[\int_{0}^{T}\|\xi(t)\|^{(a+1)\frac{m+1}{m}}_{L^{m+1}}dt\right] \leq C \mathbb{E}\left[\int_{0}^{T}\|\xi(t)\|^{m+1}_{L^{m+1}}dt\right]^{\frac{a+1}{m}} \leq C R_1^\frac{a+1}{m}. $$

\noindent Verification of (H4) (Growth): Let $u \in V, t\in[0,T], \omega\in\Omega$. Since $\|A(u)\|_{V^*} = \sup\limits_{\|w\|_V=1} |{}_{V^*}\langle A(u) , w \rangle_V|$, we have
\begin{align*}
	|_{V^*}\langle A(u) , w \rangle_V| 
	&= \left|\int_{\mathcal{O}}u^{[m]} w \,dx -\chi \int_{\mathcal{O}} (-\Delta)^{-1} (\divergence (\xi \nabla v_\xi^{[a]})) dx\right|\\
	&\leq C\|u\|_{L^{m+1}}^m \|w\|_{L^{m+1}} +\chi \| (-\Delta)^{-1} (\divergence (\xi \nabla v_\xi^{[a]}))\|_{L^{\frac{m+1}{m}}} \|w\|_{L^{m+1}}\\
	&\leq C\|u\|_{L^{m+1}}^m \|w\|_{L^{m+1}} +\chi \| \xi \nabla v_\xi^{[a]}\|_{H^{-1}_{\frac{m+1}{m}}} \|w\|_{L^{m+1}}\\
	&\leq C\|u\|_{L^{m+1}}^m \|w\|_{L^{m+1}} + C \| \xi \nabla v_\xi^{[a]}\|_{L^{1}} \|w\|_{L^{m+1}}.
\end{align*}
Under the assumption $\xi\in \mathbb{X}_\mathfrak{A}(R_1,R_2)$, we take $\alpha=m+1, \beta=0$ and nonnegative adapted process 
$$f(t):= C \|\xi(t) \nabla v_\xi^{[a]}(t)\|_{L^1(\mathcal{O})}^{\frac{m+1}{m}} \in L^1([0,T]\times \Omega, dt\otimes \mathbb{P}) .$$
Then (H2), (H3), and (H4) of Lemma \ref{lem:existence_uniqueness} hold. Therefore, the model \eqref{auxiliary_system} has a unique solution $u(t)$ which satisfies
$$\mathbb{E} \left[\sup\limits_{0\leq s\leq T} \|u(s)\|_{H_2^{-1}(\mathcal{O})}^2 + \int_{0}^{T} \|u(s)\|_{L^{m+1}(\mathcal{O})}^{m+1} ds\right] < \infty.$$
\qed

\begin{proposition} \label{prop:1}
	For $\xi \in \mathbb{X}_\mathfrak{A}(R_1,R_2)$, let $u$ be the unique solution of the model \eqref{auxiliary_system}. For $a\geq 1$ and $m \geq 2a+1$, there exist numbers $R_1 >0$ and $R_2 >0$ such that $\mathcal{T} $ maps $\mathbb{X}_\mathfrak{A}(R_1,R_2)$ into itself.
\end{proposition}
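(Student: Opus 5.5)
The plan is to derive the two energy estimates defining $\mathbb{X}_\mathfrak{A}(R_1,R_2)$ for $u=\mathcal{T}\xi$, using Itô's formula. The bounds will have the form $C_0 + C(R_1^{\gamma_1}+R_2^{\gamma_2})$ with exponents $\gamma_i<1$, or at worst $\gamma_i\le 1$ with a small constant. Then we choose $R_1,R_2$ large enough that these right-hand sides do not exceed $R_1$ and $R_2$ respectively.

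\emph{Step 1 ($H^{-1}_2$ estimate).} Apply Itô's formula to $\|u\|_{H_2^{-1}}^2$ in the Gelfand triple. We use \eqref{dual:V*toV} and the coercivity computation from the proof of Lemma \ref{lemma1}, but keep the constant 2 in front of $\|u\|_{L^{m+1}}^{m+1}$ by splitting Young's inequality with a small $\varepsilon$. This gives
\[
\|u(t)\|_{H^{-1}_2}^2+\int_0^t\|u\|_{L^{m+1}}^{m+1}ds\le\|u_0\|_{H^{-1}_2}^2+C\int_0^t\|u\|_{H^{-1}_2}^2ds+C\int_0^t\|\xi\nabla v_\xi^{[a]}\|_{L^1}^{\frac{m+1}{m}}ds+M(t),
\]
where $M$ is the martingale $2\int\langle u,u\,dW\rangle_{H^{-1}_2}$. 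We then take the supremum in time and expectations. The martingale term is handled by Burkholder--Davis--Gundy and the bound $\|B(u)\|_{L_2}^2\le C_1\|u\|_{H^{-1}_2}^2$, which lets us absorb $\tfrac12\mathbb{E}\sup\|u\|_{H^{-1}_2}^2$. Gronwall's inequality then controls the rest. The correct factor $4$ in the definition is obtained by rescaling the constants, or by rescaling $R_1$.

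For the forcing term we use \eqref{ineq:v_infty}: $|\nabla v^{[a]}|=a|v|^{a-1}|\nabla v|\le C\|\xi\|_{L^2}^a$, so $\|\xi\nabla v_\xi^{[a]}\|_{L^1}\le C\|\xi\|_{L^{m+1}}^{a+1}$. The resulting power is $(a+1)\frac{m+1}{m}$. Since $m\ge 2a+1$, we have $(a+1)(m+1)/m<m+1$, i.e.\ $a+1<m$. Hölder in $(t,\omega)$ therefore gives a bound $C R_1^{(a+1)/m}$ with $(a+1)/m<1$. This sublinear power is what lets $R_1$ be chosen.

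\emph{Step 2 ($L^{m+1}$ estimate).} Apply Itô's formula to $\|u\|_{L^{m+1}}^{m+1}$. This is formal, so it should be justified by a Galerkin or regularized approximation, as in \cite{barbu2016stochastic}. The diffusion produces the dissipative term
\[
-(m+1)m\int|u|^{m-1}\nabla u\cdot\nabla u^{[m]}=-m^2(m+1)\big\||u|^{m-1}\nabla u\big\|_{L^2}^2,
\]
which matches $C_m$. The noise produces the Itô correction $\frac{(m+1)m}{2}\sum_k\mu_k^2\int|u|^{m+1}\psi_k^2\le C\|u\|_{L^{m+1}}^{m+1}$ together with a martingale, and the martingale is treated by BDG as in Step 1. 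The chemotaxis term, after integration by parts with Neumann boundary conditions, is
\[
\chi(m+1)m\int|u|^{m-1}\nabla u\,\xi\,\nabla v_\xi^{[a]}.
\]
By Young's inequality this is at most
\[
\tfrac12 C_m\||u|^{m-1}\nabla u\|_{L^2}^2+C\|\nabla v_\xi^{[a]}\|_{L^\infty}^2\|\xi\|_{L^2}^2\le \tfrac12 C_m\||u|^{m-1}\nabla u\|_{L^2}^2+C\|\xi\|_{L^{m+1}}^{2a+2}.
\]

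\emph{Main obstacle: Step 3.} We must control $\mathbb{E}\int_0^T\|\xi\|_{L^{m+1}}^{2a+2}dt$ by the $R_1$ quantity and close the estimate. The hypothesis $m\ge 2a+1$ gives $2a+2\le m+1$, so Hölder yields the bound $CR_1^{(2a+2)/(m+1)}$, and the exponent is at most $1$. When $m>2a+1$ the exponent is strictly sublinear. In the borderline case $m=2a+1$ the growth is linear in $R_1$, and I would handle it by making the coefficient small: weight the Young inequality differently, or obtain extra smallness from $\chi$ or from time splitting. I expect the equality case to be the delicate point.

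After Gronwall, this gives
\[
\mathbb{E}\Big[\sup_t\|u\|_{L^{m+1}}^{m+1}+\int_0^T C_m\||u|^{m-1}\nabla u\|^2\Big]\le C\big(\mathbb{E}\|u_0\|_{L^{m+1}}^{m+1}+R_1^{\gamma}\big).
\]
Note that this bound involves only $R_1$ and not $R_2$. So we first choose $R_1$ satisfying $C(1+R_1^{(a+1)/m})\le R_1$, which is possible by sublinearity, and then set $R_2:=C(\mathbb{E}\|u_0\|^{m+1}_{L^{m+1}}+R_1^{\gamma})$. With these choices $\mathcal{T}$ maps $\mathbb{X}_\mathfrak{A}(R_1,R_2)$ into itself.
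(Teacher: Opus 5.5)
Your proposal is correct and follows essentially the same route as the paper. First, an It\^o/BDG/Gronwall estimate for $\|u\|_{H^{-1}_2}^2$ gives forcing of order $R_1^{(a+1)/m}$ with $(a+1)/m<1$, which fixes $R_1$. Second, an It\^o estimate for $\|u\|_{L^{m+1}}^{m+1}$ gives forcing $\mathbb{E}\int_0^T\|\xi\|_{L^{m+1}}^{2a+2}\,dt\le C R_1^{(2a+2)/(m+1)}$, which is where $m\ge 2a+1$ enters; since this depends only on $R_1$, $R_2$ is then simply taken large. Consequently the borderline case $m=2a+1$ that you flag in Step 3 needs no smallness argument, as your own final paragraph already observes.
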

\begin{proof}
The proof proceeds in two steps.

Step 1: Applying the It\^o formula \cite[Theorem 4.2.5]{liu2015stochastic} to the function $ \|u(t)\|^2_{H_2^{-1}}$, we get 
\begin{equation}
	\label{prop11}
	\begin{aligned}
		&\quad \|u(t)\|_{H_2^{-1}}^2 - \|u_0\|_{H_2^{-1}}^2 \\
		&= 2\int_0^t  {}_{V^*}\langle \Delta u^{[m]}(s), u(s) \rangle_V 
		- \chi  _{V^*}\left\langle  \divergence(\xi(s) \nabla v_\xi^{[a]}(s)), u(s) \right\rangle_V  ds \\
		&\quad +  \int_0^t \|u(s)\|^2_{L_2(L^2,H_2^{-1})}  ds 
		+ 2 \int_0^t  \langle u(s) , u(s)dW(s)\rangle_{H_2^{-1}}  \\
		&\leq - 2 \int_{0}^{t} \|u\|_{L^{m+1}}^{m+1} ds +2\chi \int_{0}^{t} |\int_{\mathcal{O}} (-\Delta)^{-1} (\xi(s,x)\nabla v_\xi^{[a]}(s,x)) u(s,x) dx| ds \\
		&\quad + C\int_{0}^{t}  \|u(s)\|_{H_2^{-1}}^2 ds + 2 \sum_{k=1}^{\infty} \int_{0}^{t} \langle u(s),\mu_k u(s) \psi_{k}\rangle_{H_2^{-1}} d\beta_k(s). 
	\end{aligned}
\end{equation}
Again using Young's inequality and the Sobolev embedding $L^1(\mathcal{O}) \hookrightarrow H_{\frac{m+1}{m}}^{-1} (\mathcal{O})$ to estimate the nonlinear term, we have
\begin{equation}
	\label{prop12}
	\begin{aligned}
		&\quad|\int_{\mathcal{O}} (-\Delta)^{-1} (\xi(s,x)\nabla v_\xi^{[a]}(s,x)) u(s,x) dx| \\
		&\leq  \|u(s)\|_{L^{m+1}} \|\xi(s,x)\nabla v_\xi^{[a]}(s,x)\|_{H_{\frac{m+1}{m}}^{-1}} \\ &\leq \|u(s)\|_{L^{m+1}}^{m+1} + C \|\nabla v_\xi^{[a]}(s,x)\|_{L^\infty}^{\frac{m+1}{m}} \|\xi(s)\|_{L^1}^{\frac{m+1}{m}} \\
		&\leq \|u(s)\|_{L^{m+1}}^{m+1} + C \|\xi\|_{L^{m+1}}^{(a+1)\frac{m+1}{m}}.
	\end{aligned}
\end{equation}
Take the supremum over \([0, t]\) in equation \eqref{prop11} and then take the expectation, we have 
\begin{equation}
	\label{prop13}
	\begin{aligned}
		&\quad \frac{1}{2}\mathbb{E} \sup\limits_{0\leq t\leq T} \|u(t)\|_{H_2^{-1}}^2 - \frac{1}{2}\mathbb{E}\|u_0\|_{H_2^{-1}}^2 \\
		&\leq -\mathbb{E} \int_{0}^{T} \|u(t)\|_{L^{m+1}}^{m+1} dt +C_\chi \mathbb{E} \int_{0}^{T}\|\xi(t)\|_{L^{m+1}}^{(a+1)\frac{m+1}{m}} dt \\
		&\quad+  C \mathbb{E}\int_{0}^{T}  \|u(t)\|_{H_2^{-1}}^2 dt + \mathbb{E} [\sup\limits_{0\leq t\leq T} \sum_{k=1}^\infty | \int_{0}^{t}\langle u(s),\mu_k u(s)\psi_{k}\rangle_{H_2^{-1}}d\beta_k(s) |]\\
		&\leq -\mathbb{E} \int_{0}^{T} \|u(t)\|_{L^{m+1}}^{m+1} dt +C_\chi T^{\frac{m-a-1}{m}}  \mathbb{E} [\int_{0}^{T}\|\xi(t)\|_{L^{m+1}}^{m+1} dt]^{\frac{a+1}{m}} \\
		&\quad+  C \mathbb{E}\int_{0}^{T}  \|u(t)\|_{H_2^{-1}}^2 dt + \mathbb{E} [\sup\limits_{0\leq t\leq T} \sum_{k=1}^\infty | \int_{0}^{t}\langle u(s),\mu_k u(s)\psi_{k}\rangle_{H_2^{-1}}d\beta_k(s) |].
	\end{aligned}
\end{equation}
The Burkholder-Davis-Gundy inequality with $p=1$ and Young's inequality show that 
\begin{equation}
	\label{prop14}
	\begin{aligned}
		&\quad\mathbb{E} [\sup\limits_{0\leq t\leq T} \sum_{k=1}^{\infty} |\int_{0}^{t}\langle u(s),u(s)\psi_{k}\rangle_{H_2^{-1}}d\beta_k(s) | ] \\
		&\leq \mathbb{E} \left[\left( \sum_{k=1}^{\infty} \int_{0}^{T}\langle u(t),\mu_k u(t)\psi_{k}\rangle^2_{H_2^{-1}}dt \right)^{\frac{1}{2}} \right] \\
		&\leq \mathbb{E} \left[\left( \sum_{k=1}^{\infty} \int_{0}^{T} \|u(t)\|_{H_2^{-1}}^2 \|\mu_k u(t) \psi_{k}\|_{H_2^{-1}}^2 dt \right)^{\frac{1}{2}}\right]\\
		&\leq \mathbb{E} \left[\left( \sup\limits_{0\leq t\leq T}\|u(t)\|_{H_2^{-1}}^2  \right)^{\frac{1}{2}} \left( C \int_{0}^{T}  \|u(t) \|_{H_2^{-1}}^2 dt \right)^{\frac{1}{2}}\right]\\
		&\leq \frac{1}{4} \mathbb{E} \sup\limits_{0\leq t\leq T} \|u(t)\|_{H_2^{-1}}^2 + C \mathbb{E} \left[ \int_{0}^{T}  \|u(t) \|_{H_2^{-1}}^2 dt \right].
	\end{aligned}
\end{equation}
Combining \eqref{prop13} and \eqref{prop14}, we obtain 
\begin{equation}
	\begin{aligned}
		&\quad\frac{1}{4}\mathbb{E} \sup\limits_{0\leq t\leq T} \|u(t)\|_{H_2^{-1}}^2 - \frac{1}{2}\mathbb{E}\|u_0\|_{H_2^{-1}}^2 \\
		&\leq -\mathbb{E} \int_{0}^{T} \|u(t)\|_{L^{m+1}}^{m+1} dt +C_\chi T^{\frac{m-a-1}{m}}  \mathbb{E} [\int_{0}^{T}\|\xi(t)\|_{L^{m+1}}^{m+1} dt]^{\frac{a+1}{m}} +  C \mathbb{E}\int_{0}^{T}  \|u(t)\|_{H_2^{-1}}^2 dt.
	\end{aligned}
\end{equation}
Rearranging gives
\begin{equation}
	\begin{aligned}
		&\quad\frac{1}{4}\mathbb{E} \sup\limits_{0\leq t\leq T} \|u(t)\|_{H_2^{-1}}^2 + \mathbb{E} \int_{0}^{T} \|u(t)\|_{L^{m+1}}^{m+1} dt\\  
		&\leq  \frac{1}{2}\mathbb{E}\|u_0\|_{H_2^{-1}}^2  +C_\chi T^{\frac{m-a-1}{m}} \mathbb{E} [\int_{0}^{T}\|\xi(t)\|_{L^{m+1}}^{m+1} ]^{\frac{a+1}{m}} dt +  C \mathbb{E}\int_{0}^{T}  \|u(t)\|_{H_2^{-1}}^2 dt.
	\end{aligned}
\end{equation}
Since \(\xi \in \mathbb{X}_\mathfrak{A}(R_1,R_2)\), by Gronwall's lemma, there exists a constant $C>0$ such that
\begin{equation}
	\begin{aligned}
		\mathbb{E} \sup\limits_{0\leq t\leq T} \|u(t)\|_{H_2^{-1}}^2 + 4\mathbb{E} \int_{0}^{T} \|u(t)\|_{L^{m+1}}^{m+1} dt \leq  e^{CT} [\mathbb{E}\|u_0\|_{H_2^{-1}}^2  +C_\chi T^{\frac{m-a-1}{m}} R_1^{\frac{a+1}{m}} ],
	\end{aligned}
\end{equation}
then we can choose $R_1$ large enough such that 
\begin{align}
	\mathbb{E} \sup\limits_{0\leq t\leq T} \|u(t)\|_{H_2^{-1}}^2 + 4\mathbb{E} \int_{0}^{T} \|u(t)\|_{L^{m+1}}^{m+1} dt \leq  R_1.	
\end{align}
Step 2: Applying It\^o's formula to $\|u(t)\|_{L^{m+1}}^{m+1}$ to estimate $\|u(t)\|_{L^{m+1}}^{m+1}$:
\begin{equation}
	\label{prop21}
	\begin{aligned}
		&\quad\|u(t)\|_{L^{m+1}}^{m+1} -\|u(0)\|_{L^{m+1}}^{m+1} \\
		&= (m+1)\int_{0}^{t}\left( \int_{\mathcal{O}}|u(s)|^{m-1}u(s) \Delta(u^{[m]}(s)) dx - \chi\int_{\mathcal{O}}  |u(s)|^{m-1}u(s) \divergence (\xi(s) \nabla v_\xi^{[a]}(s)) dx \right)ds\\
		&\quad + (m+1)\sum_{k=1}^{\infty} \int_{0}^{t} \int_{\mathcal{O}} |u(s)|^{m-1} u(s)  \cdot \mu_k u(s) \psi_{k} dx d\beta_k \\
		&\quad+ m(m+1)\int_{0}^{t}\int_{\mathcal{O}} |u(s)|^{m-1}  \sum_{k=1}^\infty |\mu_k u(s) \psi_{k}|^2 dx ds \\
		&\leq -m^2(m+1)\int_{0}^{t}\int_{\mathcal{O}} u^{2m-2}(s) |\nabla u(s)|^2  dx ds + \chi(m+1)\int_{0}^{t}\int_{\mathcal{O}} \nabla u^{[m]}(s) \xi(s) \nabla v_\xi^{[a]}(s) dx ds\\
		&\quad + (m+1)\sum_{k=1}^{\infty} \int_{0}^{t} \int_{\mathcal{O}} |u(s)|^{m+1}  \cdot \mu_k \psi_{k} dx d\beta_k + Cm(m+1) \int_{0}^{t} \|u(s)\|_{L^{m+1}}^{m+1} ds.
	\end{aligned}
\end{equation} 
Using the Young's inequality, we can write 
\begin{equation}
	\label{prop22}
	\begin{aligned}
		&\quad\chi(m+1) \int_{0}^{t}\int_{\mathcal{O}} \nabla u^{[m]}(s) \xi(s) \nabla v_\xi^{[a]}(s) dx ds \\
		&\leq \frac{m^2(m+1)}{2} \int_{0}^{t} \|\nabla u^{[m]}(s)\|_{L^2}^2 ds + C(m,\chi) \int_{0}^{t} \|\xi(s)\nabla v_\xi^{[a]}(s)\|_{L^2}^2 ds \\
		&\leq \frac{m^2(m+1)}{2}  \int_{0}^{t}\int_{\mathcal{O}} u^{2m-2}(s)|\nabla u(s)|^2 dx ds + C(m,\chi)\int_{0}^{t} \|\xi(s)\|_{L^2}^2 \|\nabla v_\xi^{[a]}(s)\|_{L^\infty}^2 ds.
	\end{aligned}
\end{equation}
By the Burkholder–Davis–Gundy inequality and Young's inequality, we have 
\begin{equation}
	\label{prop23}
	\begin{aligned}
		&\quad (m+1) \mathbb{E}\left[\sup\limits_{0\leq t\leq T}\sum_{k=1}^{\infty} \int_{0}^{t} \int_{\mathcal{O}} |u(s)|^{m+1}  \cdot \mu_k \psi_{k} dx d\beta_k  \right]  \\
		&\leq C(m+1) \mathbb{E} \left[\left(\int_{0}^{T}(\int_{\mathcal{O}} |u(t) |^{m+1} dx)^2 dt\right)^\frac{1}{2}\right] \\
		&\leq C(m+1) \mathbb{E} \left(\sup\limits_{0\leq t\leq T} \|u(t)\|_{L^{m+1}}^{m+1}\right)^\frac{1}{2} \left(\int_{0}^{T}\|u(t)\|_{L^{m+1}}^{m+1} dt \right)^\frac{1}{2} \\
		&\leq \frac{1}{2} \mathbb{E}  \sup\limits_{0\leq t\leq T} \|u(t)\|_{L^{m+1}}^{m+1} +C_m \mathbb{E} \int_{0}^{T} \|u(t)\|_{L^{m+1}}^{m+1} dt.
	\end{aligned}
\end{equation}

Combining  \eqref{prop22} and \eqref{prop23}, taking the supremum over \([0, T]\) in \eqref{prop21} and then the expectation, we obtain
\begin{equation}
	\label{prop24}
	\begin{aligned}
		&\quad\frac{1}{2}\mathbb{E} \sup\limits_{0\leq t\leq T} \|u(t)\|_{L^{m+1}}^{m+1} +\frac{m^2(m+1)}{2}\mathbb{E}\int_{0}^{T} \||u(t)|^{m-1} \nabla u(t)\|_{L^2}^2 dt \\
		&\leq \mathbb{E} \|u_0\|_{L^{m+1}}^{m+1} +C(\chi,m) \mathbb{E} \int_{0}^{T} \|\xi(t)\|_{L^2}^2 \|\nabla v_\xi^{[a]}(t)\|_{L^\infty}^2 dt + C_m \mathbb{E} \int_{0}^{T} \|u(t)\|_{L^{m+1}}^{m+1} dt \\
		&\leq \mathbb{E} \|u_0\|_{L^{m+1}}^{m+1} +C(\chi,m) T R_1^{\frac{2a+2}{m+1}} + C_m \mathbb{E} \int_{0}^{T} \|u(t)\|_{L^{m+1}}^{m+1} dt.
	\end{aligned}
\end{equation}
By Gronwall's inequality we get there exist $C >0$ such that
\begin{equation}
	\label{prop25}
	\begin{aligned}
		&\quad\mathbb{E} \sup\limits_{0\leq t\leq T} \|u(t)\|_{L^{m+1}}^{m+1} + {m^2(m+1)}\mathbb{E}\int_{0}^{T} \||u(t)|^{m-1} \nabla u(t)\|_{L^2}^2 dt \\
		&\leq  e^{CT} \left(2\mathbb{E} \|u_0\|_{L^{m+1}}^{m+1} +C(\chi,m) T R_1^{\frac{2a+2}{m+1}}\right).
	\end{aligned}
\end{equation}
By choosing $R_2$ sufficiently large, we have
\begin{align}
	\mathbb{E} \sup\limits_{0\leq t\leq T} \|u(t)\|_{L^{m+1}}^{m+1} + {m^2(m+1)}\mathbb{E}\int_{0}^{T} \||u(t)|^{m-1} \nabla u(t)\|_{L^2}^2 dt \leq R_2.
\end{align}
Summarizing, we have shown that there exists $R_1 >0$ and $R_2>0$ such that $\mathcal{T}$ maps $\mathbb{X}_\mathfrak{A}(R_1,R_2)$ into itself, which finishes the proof.
\end{proof} 
Next, we prove the continuity of the operator $\mathcal{T}$.

\begin{proposition}\label{prop:2}
	For all $\xi_1,\xi_2 \in \mathbb{X}_\mathfrak{A}(R_1,R_2)$, let $u_1 := \mathcal{T}(\xi_1), u_2:=\mathcal{T}(\xi_2)$ be the corresponding solutions of the model \eqref{auxiliary_system} with the same initial data $u_0$. For any $\varepsilon >0$, there exists $\delta >0$ such that if $\|\xi_1 -\xi_2\|_{\mathbb{X}_\mathfrak{A}} \leq \delta $, then 
	\begin{align*}
		\|\mathcal{T}(\xi_1) -\mathcal{T}(\xi_2)\|_{\mathbb{X}_\mathfrak{A}} \leq \varepsilon.
	\end{align*}
\end{proposition}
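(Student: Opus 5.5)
We estimate the difference $w:=u_1-u_2$ in the $H=H^{-1}_2(\mathcal O)$-norm, which measures the $\mathbb{X}_\mathfrak{A}$-distance. Since both solutions start from $u_0$, $w(0)=0$ and $w$ satisfies
\begin{equation*}
dw=\big(\Delta(u_1^{[m]}-u_2^{[m]})-\chi\divergence(\xi_1\nabla v_{\xi_1}^{[a]}-\xi_2\nabla v_{\xi_2}^{[a]})\big)dt+w\,dW(t).
\end{equation*}
We apply the It\^o formula \cite[Theorem 4.2.5]{liu2015stochastic} to $\|w(t)\|_{H_2^{-1}}^2$, exactly as in Step 1 of Proposition \ref{prop:1}, and handle the terms as follows.

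\emph{Diffusion term.} By \eqref{dual:V*toV} it equals $-2\int_{\mathcal O}(u_1^{[m]}-u_2^{[m]})(u_1-u_2)\,dx$. Monotonicity of $s\mapsto s^{[m]}$ and the elementary inequality $(s_1^{[m]}-s_2^{[m]})(s_1-s_2)\ge c_m|s_1-s_2|^{m+1}$ bound it by $-2c_m\|w\|_{L^{m+1}}^{m+1}$, which controls the integral part of the norm.

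\emph{Noise term.} As in (H2) and \eqref{prop14}, the Hilbert--Schmidt term is bounded by $C\|w\|_{H_2^{-1}}^2$. The martingale is handled by Burkholder--Davis--Gundy, yielding $\tfrac14\mathbb E\sup_t\|w\|_{H_2^{-1}}^2+C\mathbb E\int_0^T\|w\|_{H_2^{-1}}^2\,dt$.

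\emph{Chemotaxis term (the main obstacle).} As in \eqref{prop12}, it is bounded by $\|w\|_{L^{m+1}}\,\|\xi_1\nabla v_{\xi_1}^{[a]}-\xi_2\nabla v_{\xi_2}^{[a]}\|_{L^1}$. We split
\begin{equation*}
\xi_1\nabla v_{\xi_1}^{[a]}-\xi_2\nabla v_{\xi_2}^{[a]}=(\xi_1-\xi_2)\nabla v_{\xi_1}^{[a]}+\xi_2\big(\nabla v_{\xi_1}^{[a]}-\nabla v_{\xi_2}^{[a]}\big).
\end{equation*}
Write $\nabla v^{[a]}=a|v|^{a-1}\nabla v$. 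Linearity of the elliptic problem and \eqref{ineq:v_infty} give $\|v_{\xi_1}-v_{\xi_2}\|_{W^{1,\infty}}\le C\|\xi_1-\xi_2\|_{L^2}$. A mean value argument for $|v|^{a-1}$ (using $a\ge1$) then gives
\begin{equation*}
\|\nabla v_{\xi_1}^{[a]}-\nabla v_{\xi_2}^{[a]}\|_{L^\infty}\le C\big(\|\xi_1\|_{L^2}+\|\xi_2\|_{L^2}\big)^{a-1}\|\xi_1-\xi_2\|_{L^2}.
\end{equation*}
Combining, the $L^1$ norm of the difference is at most $C(\|\xi_1\|_{L^{m+1}}+\|\xi_2\|_{L^{m+1}})^{a}\|\xi_1-\xi_2\|_{L^{m+1}}$. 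Young's inequality absorbs $\|w\|_{L^{m+1}}^{m+1}$ into the diffusion term and leaves
\begin{equation*}
C\,\big(\|\xi_1\|_{L^{m+1}}+\|\xi_2\|_{L^{m+1}}\big)^{a\frac{m+1}{m}}\,\|\xi_1-\xi_2\|_{L^{m+1}}^{\frac{m+1}{m}}.
\end{equation*}

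\emph{Integrability.} We use H\"older in $(t,\omega)$ with exponents $\frac{m}{a}$ and $\frac{m}{m-a}$ applied to $\|\xi_1-\xi_2\|^{\frac{m+1}{m}}$ after a further split. Here $m\ge2a+1$ is what makes the exponents fit within the $L^{m+1}$-integrability available from $R_1$. One option is to bound $(\|\xi_1\|+\|\xi_2\|)^{a\frac{m+1}{m}}$ in $L^{\frac{m}{a}}$, giving $R_1^{a/m}$. The other is to interpolate $\|\xi_1-\xi_2\|_{L^{m+1}}^{(m+1)\frac{1}{m-a}}$ between $\delta$ and the uniform $R_1,R_2$ bounds, giving a factor $C(R_1,R_2)\,\delta^{\gamma}$ for some $\gamma>0$. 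If a sup-in-time factor is needed, $R_2$ supplies $\mathbb E\sup_t\|\xi\|_{L^{m+1}}^{m+1}$.

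Finally, Gronwall's lemma yields
\begin{equation*}
\mathbb E\Big[\sup_t\|w\|_{H_2^{-1}}^2+\int_0^T\|w\|_{L^{m+1}}^{m+1}\,dt\Big]\le C(T,R_1,R_2)\,\delta^\gamma\le\varepsilon
\end{equation*}
for $\delta$ small. The delicate point is this exponent bookkeeping in the chemotaxis difference. If the $\mathbb X_\mathfrak A$-norm also includes the $L^{m+1}$-sup/gradient part, we would repeat the argument with It\^o on $\|w\|_{L^{m+1}}^{m+1}$ as in Step 2. That step is harder because of the non-monotone gradient cross terms. There I would first settle for continuity in the weaker $H_2^{-1}$-based metric, which suffices for the Schauder--Tychonoff argument combined with compactness.
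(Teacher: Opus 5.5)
\textbf{Main gap: the integrability step.} Your skeleton matches the paper's: It\^o's formula for $\|u_1-u_2\|_{H_2^{-1}}^2$, monotonicity of $s\mapsto s^{[m]}$, Burkholder--Davis--Gundy, the splitting of the flux, Young and Gronwall. But the step that carries the proof is only asserted, namely turning $\delta$-smallness of $\xi_1-\xi_2$ into smallness of the flux difference, and the ingredients you name cannot deliver it.

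You adopt, as the paper does, an $H_2^{-1}$-based distance; the paper's final estimate is in terms of $\mathbb{E}\sup_t\|\xi_1-\xi_2\|_{H_2^{-1}}^2$. Smallness in this distance, together with $L^{m+1}$ bounds, does not make $\|\xi_1-\xi_2\|_{L^{m+1}}$, or even $\|\xi_1-\xi_2\|_{L^1}$, small. Those $L^{m+1}$ bounds are all that $R_1$ and the $\sup_t$ part of $R_2$ give, and the sup bound is the only use of $R_2$ you make explicit. For example, $\xi_1=1+\tfrac12\sin(nx)$ and $\xi_2=1$ satisfy every one of these bounds uniformly in $n$, are $O(1/n)$ apart in $H_2^{-1}$, and are $O(1)$ apart in every $L^p$. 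What excludes such oscillations is the gradient part $\mathbb{E}\int_0^T\||\xi|^{m-1}\nabla\xi\|_{L^2}^2\,dt\le R_2$, and this is the missing idea.

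The paper uses it as follows:
\begin{itemize}
\item It bounds the flux through $\|\xi_1-\xi_2\|_{L^2}$.
\item It interpolates $\|\xi_1-\xi_2\|_{L^2}\le\|\xi_1-\xi_2\|_{H_2^{-1}}^{\theta}\|\xi_1-\xi_2\|_{H_2^{\rho}}^{1-\theta}$ with $\rho\le 1/m$.
\item It controls $\int_0^T\|\xi_i\|_{H_2^{\rho}}^{2m}\,dt$ by the gradient term (Technical Proposition B.1 of \cite{mukherjee2025martingale}).
\item It applies H\"older in $(t,\omega)$ with exponents $2m$, $2$, $\frac{2m}{m-1}$. This last step is where $m\ge 2a+1$ is needed, to put $\mathbb{E}\sup_t\|\xi_1\|_{L^{m+1}}^{2a(m+1)/(m-1)}$ under the $R_2$ bound. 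Your own H\"older split only needs $m\ge a+1$.
\end{itemize}

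Measuring $\xi_1-\xi_2$ in $L^{m+1}$ instead of $L^1$ or $L^2$ makes things worse. The $H_2^{\rho}$-control ($\rho\le 1/m$) that the gradient term yields does not embed into $L^{m+1}(\mathcal O)$ once $m\ge 4$, so you should bound the flux by $\|\xi_1-\xi_2\|_{L^2}$.

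If instead your $\delta$ was meant to control $\mathbb{E}\int_0^T\|\xi_1-\xi_2\|_{L^{m+1}}^{m+1}\,dt$, Jensen's inequality would indeed suffice. But $\mathcal T$ would then be continuous in a topology in which it has not been shown to be compact: Proposition~\ref{prop:3} only gives $H_2^{-1}$ time regularity and bounds. So the Schauder--Tychonoff argument would not close.

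\textbf{Second issue: the Lipschitz bound fails for $1<a<2$.} Your bound $\|\nabla v_{\xi_1}^{[a]}-\nabla v_{\xi_2}^{[a]}\|_{L^\infty}\le C(\|\xi_1\|_{L^2}+\|\xi_2\|_{L^2})^{a-1}\|\xi_1-\xi_2\|_{L^2}$ is false in this range, because $s\mapsto|s|^{a-1}$ is only $(a-1)$-H\"older at $0$. Take $v_{\xi_1}=v_{\xi_2}+\eta$ with a small constant $\eta$, so that $\xi_1-\xi_2=\eta$, and let $v_{\xi_2}$ have a zero at which $|\nabla v_{\xi_2}|=1$. At that zero, $a\bigl(|v_{\xi_2}+\eta|^{a-1}-|v_{\xi_2}|^{a-1}\bigr)\nabla v_{\xi_2}$ has size $a\eta^{a-1}\gg\eta$.

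You must instead use $\||v_{\xi_1}|^{a-1}-|v_{\xi_2}|^{a-1}\|_{L^\infty}\le\|v_{\xi_1}-v_{\xi_2}\|_{L^\infty}^{a-1}$. This still yields continuity, but with the smaller exponent $(a-1)\theta\frac{m+1}{2m}$, which is why the paper treats $1<a<2$ and $a\ge 2$ separately.
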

\begin{proof}
	We consider the difference \( u_1 - u_2 \), which satisfies
	\begin{align*}
		d(u_1 -u_2) = \left(\Delta (u_1^{[m]} - u_2^{[m]}) - \chi \divergence(\xi_1 \nabla v_{\xi_1}^{[a]} - \xi_2 \nabla v_{\xi_2}^{[a]})\right) dt + (u_1-u_2) dW(t).
	\end{align*}
	Applying It\^o's formula to $\|u_1(t) -u_2(t)\|_{H_2^{-1}(\mathcal{O})}^2$ and by canonical calculations we obtain
	\begin{align*}
		&\quad\frac{1}{2} \|u_1(t)-u_2(t)\|_{H_2^{-1}}^2 \\
		&\leq\int_{0}^{t}\langle u_1(s) -u_2(s), \Delta (u_1^{[m]}(s)-u_2^{[m]}(s)) -\chi \divergence(\xi_1(s) \nabla v_{\xi_1}^{[a]}(s) - \xi_2(s) \nabla v_{\xi_2}^{[a]}(s))  \rangle_{H_2^{-1}} ds \\
		&\quad+\sum_{k=1}^{\infty} \int_{0}^{t} \langle u_1(s)-u_2(s), (u_1(s) -u_2(s))\mu_k \psi_{k} \rangle_{H^{-1}_2} d\beta_k(s)\\
		&\quad+ \frac{1}{2}\int_{0}^{t } \|u_1(s)-u_2\|^2_{L_2(L^2,H_2^{-1})}ds \\
		&\leq -\int_{0}^{t} \|u_1(s)-u_2(s)\|_{L^{m+1}}^{m+1} ds  \\
		&\quad +\chi\int_{0}^{t}\int_{\mathcal{O}} (-\nabla)^{-1}(\xi_1(s) \nabla v_{\xi_1}^{[a]}(s) - \xi_2(s) \nabla v_{\xi_2}^{[a]}(s))(u_1(s)-u_2(s))dx ds \\
		&\quad + \sum_{k=1}^{\infty} \int_{0}^{t} \langle u_1(s)-u_2(s), (u_1(s) -u_2(s))\mu_k \psi_{k} \rangle_{H^{-1}_2} d\beta_k(s) \\
		&\quad + C \int_{0}^{t} \|u_1(s)-u_2(s)\|^2_{H_2^{-1}} ds .
	\end{align*}
	Taking the supremum over $[0,T]$ and taking expectations on both sides, we obtain
	\begin{align*}
		&\quad\frac{1}{2} \mathbb{E} \sup\limits_{0\leq t\leq T} \|u_1(t)-u_2(t)\|_{H_2^{-1}}^2 + \mathbb{E}\int_{0}^{T} \|u_1(t)-u_2(t)\|_{L^{m+1}}^{m+1} dt\\
		&\leq \chi \mathbb{E}\int_{0}^{T}\left| \int_{\mathcal{O}} (-\nabla)^{-1}(\xi_1(t) \nabla v_{\xi_1}^{[a]}(t) - \xi_2(t) \nabla v_{\xi_2}^{[a]}(t))(u_1(t)-u_2(t))dx \right|dt \\
		&\quad + \sum_{k=1}^{\infty}\mathbb{E} \sup\limits_{0\leq t\leq T} \left| \int_{0}^{t} \langle u_1(s)-u_2(s), (u_1(s) -u_2(s))\mu_k \psi_{k} \rangle_{H^{-1}_2} d\beta_k(s)\right| \\
		&\quad + C \int_{0}^{T} \|u_1(t)-u_2(t)\|^2_{H_2^{-1}} dt  \\
		&:= J_1 +J_2 +J_3.
	\end{align*}
	To estimate $J_1$, we carry out the following calculation:
	\begin{equation}
		\begin{aligned}
			&\quad \chi\mathbb{E}\int_{0}^{T}\int_{\mathcal{O}} (-\nabla)^{-1}(\xi_1(t) \nabla v_{\xi_1}^{[a]}(t) - \xi_2(t) \nabla v_{\xi_2}^{[a]}(t))(u_1(t)-u_2(t))dx dt \\
			&\leq \chi\mathbb{E} \left[\int_{0}^{T}\left| \int_{\mathcal{O}} (-\nabla)^{-1}((\xi_1(t)-\xi_2(t)) \nabla v_{\xi_1}^{[a]}(t) )(u_1(t)-u_2(t))dx \right|dt \right]\\
			&\quad+\chi\mathbb{E} \left[\int_{0}^{T} \left|\int_{\mathcal{O}} (-\nabla)^{-1}(\xi_2(t) \nabla (v_{\xi_1}^{[a]}(t) - v_{\xi_2}^{[a]})(t) )(u_1(t)-u_2(t))dx\right| dt \right]\\
			&:= I_1 +I_2.
		\end{aligned}
	\end{equation}
	By H\"older's inequality, the Young's inequality and the Sobolev embedding \[L^1(\mathcal{O}) \hookrightarrow H^{-1}_{\frac{m+1}{m}}(\mathcal{O}) ,\] there exists a constant $C>0$ with 
	\begin{align*}
		I_1(t) 
		&\leq \chi \mathbb{E}\left[ \int_0^T \left| \int_\mathcal{O} (-\nabla)^{-1}  \cdot \big( (\xi_1 - \xi_2) (\nabla v_{\xi_1}^{[a]}) \big) (u_1 - u_2) \, dx \right| dt \right] \\
		&\leq  \mathbb{E}\left[ \int_0^T C\left\|  (\xi_1(t) - \xi_2(t)) \nabla v_{\xi_1}^{[a]}(t) \right\|_{H^{-1}_\frac{m+1}{m}}^{\frac{m+1}{m}} + \frac{1}{2}\|u_1(t) - u_2(t)\|_{L^{m+1}}^{m+1}  dt \right] \\
		&\leq C \mathbb{E} \left[\int_{0}^{T} \|\xi_1(t)-\xi_2(t)\|_{L^2}^{\frac{m+1}{m}} \|\nabla v_{\xi_1}^{[a]}(t) \|_{L^\infty}^{\frac{m+1}{m}}  dt\right] + \frac{1}{2} \mathbb{E} \left[\int_{0}^{T}\|u_1(t) -u_2(t)\|_{L^{m+1}}^{m+1} dt\right] \\
		&\leq C \mathbb{E} \left[\int_{0}^{T} \|\xi_1(t)-\xi_2(t)\|_{L^2}^{\frac{m+1}{m}} \|\xi_1 \|_{L^{m+1}}^{a\frac{m+1}{m}}  dt\right] + \frac{1}{2} \mathbb{E} \left[\int_{0}^{T}\|u_1(t) -u_2(t)\|_{L^{m+1}}^{m+1} dt\right].
	\end{align*}
	Applying complex interpolation \cite[6.2.4 Theorem]{bergh2012interpolation}, we get for $\rho >0$ 
	\[
		(H^\rho_2(\mathcal{O}),H^{-1}_2(\mathcal{O}))_{\theta,2} = H^0_2(\mathcal{O}) = L^2(\mathcal{O})
	\]
	with $\rho(1-\theta) +\theta (-1) =0$ we know 
	\begin{align}
		\|\xi_1(t)-\xi_2(t)\|_{L^2} \leq \|\xi_1(t)-\xi_2(t)\|_{H^{-1}_2}^\theta \|\xi_1(t)-\xi_2(t)\|_{H^\rho_2}^{1-\theta}.
	\end{align} 
	Choosing $\rho \leq \frac{1}{m}$, it implies $\theta \leq \frac{1}{1+m}$. Using the H\"older inequality, for $p>1,q>1,r>1$ and $\frac{1}{p} +\frac{1}{q}+\frac{1}{r}=1$ we have 
	\begin{equation}
		\begin{aligned}
			I_1(t) 
			&\leq C \mathbb{E} \left[\int_{0}^{T} \left(\|\xi_1(t)-\xi_2(t)\|_{H^{-1}_2}^\theta \|\xi_1(t)-\xi_2(t)\|_{H^\rho_2}^{1-\theta}\right)^{\frac{m+1}{m}} \|\xi_1 \|_{L^{m+1}}^{a\frac{m+1}{m}}  dt\right] \\
			&\quad+ \frac{1}{2} \mathbb{E} \left[\int_{0}^{T}\|u_1(t) -u_2(t)\|_{L^{m+1}}^{m+1} dt\right]\\
			&\leq C \mathbb{E}\left[\left(\sup\limits_{0\leq t\leq T} \|\xi_1(t)-\xi_2(t)\|_{H^{-1}_2}^{\theta\frac{m+1}{m}}\right) \left(\int_{0}^{T}\|\xi_1(t)-\xi_2(t)\|_{H^\rho_2}^{(1-\theta)\frac{m+1}{m}} dt\right)\left(\sup\limits_{0\leq t\leq T}\|\xi_1\|_{L^{m+1}}^{a\frac{m+1}{m}}\right) \right]\\
			&\quad+ \frac{1}{2} \mathbb{E} \left[\int_{0}^{T}\|u_1(t) -u_2(t)\|_{L^{m+1}}^{m+1} dt\right]\\
			&\leq C(T) \mathbb{E} \left[\sup\limits_{0\leq t\leq T} \|\xi_1(t)-\xi_2(t)\|_{H^{-1}_2}^{p\theta\frac{m+1}{m}}\right]^{\frac{1}{p}} \mathbb{E}\left[ \int_{0}^{T}\|\xi_1(t)-\xi_2(t)\|_{H^\rho_2}^{q(1-\theta)\frac{m+1}{m}} dt \right]^{\frac{1}{q}}\\
			&\quad\times  \mathbb{E} \left[\sup\limits_{0\leq t\leq T}\|\xi_1\|_{L^{m+1}}^{r a\frac{m+1}{m}}\right]^\frac{1}{r}
			+ \frac{1}{2} \mathbb{E} \left[\int_{0}^{T}\|u_1(t) -u_2(t)\|_{L^{m+1}}^{m+1} dt\right].
		\end{aligned}
	\end{equation}
	Let $p=2m$, $q=2$ and $r=\frac{2m}{m-1}$, H\"older's inequality and Jensen's inequality yield
	\begin{equation}
		\begin{aligned}
			I_1 &\leq C(T) \mathbb{E} \left[\sup\limits_{0\leq t\leq T} \|\xi_1(t)-\xi_2(t)\|_{H^{-1}_2}^{2}\right]^{\theta\frac{m+1}{2m}} \mathbb{E}\left[ \int_{0}^{T}\|\xi_1(t)-\xi_2(t)\|_{H^\rho_2}^{2m} dt \right]^{(1-\theta)\frac{m+1}{2m^2}} \\
			&\quad \times \mathbb{E} \left[\sup\limits_{0\leq t\leq T}\|\xi_1\|_{L^{m+1}}^{ m+1}\right]^\frac{a}{m}
			+ \frac{1}{2} \mathbb{E} \left[\int_{0}^{T}\|u_1(t) -u_2(t)\|_{L^{m+1}}^{m+1} dt\right]\\
			&\leq C(T,m) \mathbb{E} \left[\sup\limits_{0\leq t\leq T} \|\xi_1(t)-\xi_2(t)\|_{H^{-1}_2}^{2}\right]^{\theta\frac{m+1}{2m}} \mathbb{E}\left[ \int_{0}^{T}\|\xi_1(t)\|_{H^\rho_2}^{2m} + \|\xi_2(t)\|_{H^\rho_2}^{2m} dt \right]^{(1-\theta)\frac{m+1}{2m^2}} \\
			&\quad \times\mathbb{E} \left[\sup\limits_{0\leq t\leq T}\|\xi_1\|_{L^{m+1}}^{ m+1}\right]^\frac{a}{m} + \frac{1}{2} \mathbb{E} \left[\int_{0}^{T}\|u_1(t) -u_2(t)\|_{L^{m+1}}^{m+1} dt\right].
		\end{aligned}
	\end{equation}
	
	\cite[Technical Proposition B.1]{mukherjee2025martingale}
	established that, for each \(i=1,2\),
	\begin{equation}
		\label{ineq:hausenblas}
		\int_0^T \|\xi_i(t)\|_{H_2^\rho}^{2m}\,dt
		\leq
		\int_0^T
		\bigl\|\xi_i^{m-1}(t)\nabla\xi_i(t)\bigr\|_{L^2}^{2}\,dt
		\leq R_2.
	\end{equation}
	Since $\xi_i \in \mathbb{X}_\mathfrak{A}(R_1,R_2),i=1,2$, we have
	\begin{align*}
		I_1 \leq C R_2^{(1-\theta)\frac{m+1}{2m^2}} R_1^\frac{a}{m} \mathbb{E} \left[\sup\limits_{0\leq t\leq T} \|\xi_1(t)-\xi_2(t)\|_{H^{-1}_2}^{2}\right]^{\theta\frac{m+1}{2m}} +\frac{1}{2} \mathbb{E} \left[\int_{0}^{T}\|u_1(t) -u_2(t)\|_{L^{m+1}}^{m+1} dt\right].
	\end{align*}
	Next, we estimate $I_2$. Again using Young's inequality and Sobolev embedding 
	\[L^1(\mathcal{O})\hookrightarrow H^{-1}_{\frac{m+1}{m}},\] we have
	\begin{equation}
		\begin{aligned}
			I_2 &\leq \chi\mathbb{E} \left[\int_{0}^{T} \left|\int_{\mathcal{O}} (-\nabla)^{-1}(\xi_2 \nabla (v_{\xi_1}^{[a]} - v_{\xi_2}^{[a]}) )(u_1(t)-u_2(t))dx\right| dt \right]\\
			&\leq C \mathbb{E} \left[\int_{0}^{T} \|\xi_2 \nabla (v_{\xi_1}^{[a]} - v_{\xi_2}^{[a]})\|_{L^1}^{\frac{m+1}{m}} + \frac{1}{4} \|(u_1(t)-u_2(t))\|_{L^{m+1}}^{m+1} dt \right].
		\end{aligned}
	\end{equation}
	Since the equation for $v$ is linear, the difference $v_{\xi_1} -v_{\xi_2}$ satisfies 
	\begin{align*}
		\begin{cases}
			-\Delta(v_{\xi_1} -v_{\xi_2}) = (\xi_1-\xi_2) - (v_{\xi_1} -v_{\xi_2}),\quad x\in\mathcal{O},\\
			\frac{\partial (v_{\xi_1}-v_{\xi_2})}{\partial n} =0, \quad x\in \partial \mathcal{O}.
		\end{cases}
	\end{align*}
	Then, using Sobolev embedding $W^{2,2}(\mathcal{O}) \hookrightarrow W^{1,\infty}(\mathcal{O})$ for $d=1$, we have
	\begin{align}
		\|v_{\xi_1}-v_{\xi_2}\|_{L^\infty} +\|\nabla(v_{\xi_1}-v_{\xi_2})\|_{L^\infty} \leq C \|\xi_1-\xi_2\|_{L^2}.
	\end{align}
	Moreover, by the chain rule,
	\[
	\nabla\big(|v_\xi|^{a-1}v_\xi\big)=a|v_\xi|^{a-1}\nabla v_\xi.
	\]
	Hence
	\begin{equation}
		\begin{aligned}
			&\quad\left\|\nabla\left(|v_{\xi_1}|^{a-1}v_{\xi_1}-|v_{\xi_2}|^{a-1}v_{\xi_2}\right)\right\|_{L^\infty} \\
			&\leq C(a)\Big(
			\|v_{\xi_1}\|_{L^\infty}^{a-1}\|\nabla(v_{\xi_1}-v_{\xi_2})\|_{L^\infty}
			+\||v_{\xi_1}|^{a-1}-|v_{\xi_2}|^{a-1}\|_{L^\infty}\|\nabla v_{\xi_2}\|_{L^\infty}
			\Big).
		\end{aligned}
	\end{equation}
	For $1<a <2$, we have 
	\begin{align}
		\label{ineq:a<2}
		\||v_{\xi_1}|^{a-1}-|v_{\xi_2}|^{a-1}\|_{L^2} \leq \| |v_{\xi_1}-v_{\xi_2}|^{a-1}\|_{L^2} \leq C(a) \|v_{\xi_1}-v_{\xi_2}\|_{L^2}^{a-1}.
	\end{align}
	For $ a\geq 2$, let $M(t)=max(\|{\xi_1}(t)\|_{L^{\infty}},\|{\xi_2}(t)\|_{L^{\infty}})$, we have
	\begin{align}
		\label{ineq:a>2}
		\||v_{\xi_1}|^{a-1}-|v_{\xi_2}|^{a-1}\|_{L^\infty} \leq C M(t)^{a-2}\|v_{\xi_1}-v_{\xi_2}\|_{L^\infty}.
	\end{align}
	It follows that for $1 < a <2$, we have 
	\begin{equation}
		\begin{aligned}
			I_2 &\leq  C(a) \mathbb{E} \Big[\int_{0}^{T} \|{\xi_1}(t)\|_{L^{m+1}}^{(a-1)\frac{m+1}{m}} \|{\xi_1}(t) -{\xi_2}(t)\|_{L^2}^{\frac{m+1}{m}}   \|\xi_2(t)\|_{L^{m+1}}^{\frac{m+1}{m}} dt\Big]\\
			& \quad +   C(a)\mathbb{E} \Big[\int_{0}^{T} \|\xi_1(t) -\xi_2(t) \|_{L^2}^{(a-1)\frac{m+1}{m}}  \| \xi_2(t)\|_{L^{m+1}}^{2\frac{m+1}{m}}  dt \Big] \\
			& \quad + \frac{1}{4}\mathbb{E} \Big[ \int_{0}^{T}\|(u_1(t)-u_2(t))\|_{L^{m+1}}^{m+1} dt \Big].
		\end{aligned}
	\end{equation}
	As in the estimation of \( I_1 \), by \eqref{ineq:hausenblas} and H\"older's inequality with $p=2m, q=2$ and  $r=\frac{2m}{m-1}$, we have
	\begin{align*}
		I_2 &\leq C(a,R_1,R_2,T) \mathbb{E} \left[\sup\limits_{0\leq t\leq T} \|\xi_1(t)-\xi_2(t)\|_{H_2^{-1}}^2\right]^{\theta\frac{m+1}{2m}} \\
		&\quad+ C(a,R_1,R_2,T) \mathbb{E} \left[\sup\limits_{0\leq t\leq T} \|\xi_1(t)-\xi_2(t)\|_{H_2^{-1}}^2\right]^{(a-1)\theta\frac{m+1}{2m}} \\
		&\quad+  \frac{1}{4}\mathbb{E} \Big[\int_{0}^{T} \|(u_1(t)-u_2(t))\|_{L^{m+1}}^{m+1} dt \Big].
	\end{align*}
	For $a\geq 2$, from \eqref{ineq:a>2} we obtain
	\begin{equation}
		\begin{aligned}
			I_2 &\leq C(a) \mathbb{E} \Big[\int_{0}^{T} \|{\xi_1}(t)\|_{L^{m+1}}^{(a-1)\frac{m+1}{m}} \|{\xi_1}(t) -{\xi_2}(t)\|_{L^2}^{\frac{m+1}{m}}   \|\xi_2(t)\|_{L^{m+1}}^{\frac{m+1}{m}} dt\Big]\\
			& \quad +   C(a)\mathbb{E} \Big[\int_{0}^{T} M(t)^{a-2} \|\xi_1(t) -\xi_2(t) \|_{L^2}^{\frac{m+1}{m}}  \| \xi_2(t)\|_{L^{m+1}}^{2\frac{m+1}{m}}  dt \Big] \\
			& \quad + \frac{1}{4}\mathbb{E} \Big[ \int_{0}^{T}\|(u_1(t)-u_2(t))\|_{L^{m+1}}^{m+1} dt \Big],
		\end{aligned}
	\end{equation}
	thus,
	\begin{align*}
		I_2 \leq C(a,R_1,R_2,T) \mathbb{E} \left[\sup\limits_{0\leq t\leq T} \|\xi_1(t)-\xi_2(t)\|_{H_2^{-1}}^2\right]^{\theta\frac{m+1}{2m}} + \frac{1}{4}\mathbb{E} \Big[\int_{0}^{T} \|(u_1(t)-u_2(t))\|_{L^{m+1}}^{m+1} dt \Big].
	\end{align*}
	Next, we estimate $J_2$. By the Burkholder-Davis-Gundy inequality and Young's inequality, we obtain 
	\begin{equation}
		\begin{aligned}
			J_2&\leq \sum_{k=1}^{\infty} \mathbb{E} \left[\left(\int_{0}^{T}\|u_1(t)-u_2(t)\|^2_{H_2^{-1}} \|(u_1(t)-u_2(t))\mu_k \psi_{k} \|^2_{H_2^{-1}} dt\right)^\frac{1}{2}\right]\\
			&\leq C \mathbb{E} \left[\left(\int_{0}^{T}\|u_1(t)-u_2(t)\|^2_{H_2^{-1}} \|u_1(t)-u_2(t) \|^2_{H_2^{-1}} dt\right)^\frac{1}{2}\right]\\
			&\leq C \mathbb{E} \left[ \left(\sup\limits_{0\leq t \leq T}\|u_1(t)-u_2(t)\|_{H_2^{-1}}^2\right)^\frac{1}{2} \left(\int_{0}^{T}\|u_1(t)-u_2(t) \|_{H_2^{-1}}^2 dt\right)^\frac{1}{2}\right]\\
			&\leq \frac{1}{4} \mathbb{E} \left[ \sup\limits_{0\leq t \leq T}\|u_1(t)-u_2(t)\|^2_{H_2^{-1}}\right] + C \mathbb{E}\left[  \int_{0}^{T}\|u_1(t)-u_2(t) \|^2_{H_2^{-1}} dt\right].
		\end{aligned}
	\end{equation}
	For $a\geq 2$, combining the estimates for \( J_1 \) and \( J_2 \), we deduce that
	\begin{align*}
		&\quad\frac{1}{4} \mathbb{E} \left[\sup\limits_{0\leq t\leq T} \|u_1(t)-u_2(t)\|_{H_2^{-1}}^2\right] + \frac{1}{4}\mathbb{E}\left[\int_{0}^{T} \|u_1(t)-u_2(t)\|_{L^{m+1}}^{m+1} dt\right]\\
		&\leq C(a,R_1,R_2,T)\mathbb{E} \left[\sup\limits_{0\leq t\leq T} \|\xi_1(t)-\xi_2(t)\|_{H_2^{-1}}^2\right]^{\theta\frac{m+1}{2m}} + C \mathbb{E}\left[\int_{0}^{T} \|u_1(t) - u_2(t)\|^2_{H_2^{-1}}dt\right].
	\end{align*}
	By Gronwall's inequality, we have that there exists a constant $C=C(a,R_1,R_2,T)>0$ such that 
	\begin{align*}
		\mathbb{E} \left[\sup\limits_{0\leq t\leq T} \|u_1(t)-u_2(t)\|_{H_2^{-1}}^2\right] \leq C(a,R_1,R_2,T)\mathbb{E}\left[\sup\limits_{0\leq t\leq T} \|\xi_1(t)-\xi_2(t)\|_{H_2^{-1}}^2\right]^{\theta\frac{m+1}{2m}}.
	\end{align*}
	Similarly, for $1<a<2$, 
	\begin{align*}
		\mathbb{E} \left[\sup\limits_{0\leq t\leq T} \|u_1(t)-u_2(t)\|_{H_2^{-1}}^2\right] &\leq C(a,R_1,R_2,T) \mathbb{E}\left[\sup\limits_{0\leq t\leq T} \|\xi_1(t)-\xi_2(t)\|_{H_2^{-1}}^2\right]^{\theta\frac{m+1}{2m}}\\
		&\quad+ C(a,R_1,R_2,T) \mathbb{E}\left[\sup\limits_{0\leq t\leq T} \|\xi_1(t)-\xi_2(t)\|_{H_2^{-1}}^2\right]^{(a-1)\theta\frac{m+1}{2m}}.
	\end{align*}
	These estimates imply that the operator $\mathcal{T}$ is continuous on the space $\mathbb{X}_\mathfrak{A}(R_1,R_2)$ in both cases where $1<a<2$ and $a\geq 2$. 
\end{proof}

We next verify that $\mathcal{T}$ is a compact operator. To this end, it suffices to show that $\mathcal{T}$ maps bounded sets into precompact sets, which is established by verifying the uniform boundedness and equicontinuity conditions required by the Ascoli-Arzelà theorem. The precise estimates are given in the following proposition.
\begin{proposition} \label{prop:3}
	For any $u_0 \in H_2^{-1}(\mathcal{O})$ satisfying $\mathbb{E}\left[ \|u_0\|_{L^{m+1}}^{m+1} \right] < \infty$, and all $R_1 > 0$ and $R_2 > 0$, it holds that
	
		\noindent (\romannumeral 1). there exists a constant $C > 0$ such that for any $\xi \in \mathbb{X}_\mathfrak{A}(R_1,R_2)$, we have
		\[\sup_{0 \leq t \leq T} \mathbb{E} \|\mathcal{T}(\xi)\|_{L^{m+1}}^{m+1} \leq C .\]
		(\romannumeral 2). there exists a constant $C = C(T, m, R_1, R_2) > 0$ such that for any $0 \leq t_1 < t_2 \leq T$ and $\xi \in \mathbb{X}_\mathfrak{A}(R_1,R_2)$ we have
		\[\mathbb{E} \|\mathcal{T}\xi(t_1) - \mathcal{T}\xi(t_2)\|_{H_2^{-1}}^2 \leq C |t_1 - t_2|.\]

\end{proposition}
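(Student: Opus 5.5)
Part (i) follows almost directly from Step 2 of the proof of Proposition \ref{prop:1}. Since $\mathcal{T}(\xi)=u$ solves \eqref{auxiliary_system}, estimate \eqref{prop25} gives
\[
\sup_{0\le t\le T}\mathbb{E}\|u(t)\|_{L^{m+1}}^{m+1}\le \mathbb{E}\sup_{0\le t\le T}\|u(t)\|_{L^{m+1}}^{m+1}\le e^{CT}\Big(2\mathbb{E}\|u_0\|_{L^{m+1}}^{m+1}+C(\chi,m)TR_1^{\frac{2a+2}{m+1}}\Big).
\]
The right-hand side depends only on $u_0,T,m,\chi,a,R_1$, so it is uniform over $\xi\in\mathbb{X}_\mathfrak{A}(R_1,R_2)$. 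I will simply restate this, and note that the moment on $\xi$ enters only through the bound $\mathbb{E}\int_0^T\|\xi\|_{L^2}^2\|\nabla v_\xi^{[a]}\|_{L^\infty}^2dt$ already controlled there.

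For (ii) I use the mild (integral) form in $H=H_2^{-1}$: for $t_1<t_2$,
\[
u(t_2)-u(t_1)=\int_{t_1}^{t_2}\Delta u^{[m]}\,ds-\chi\int_{t_1}^{t_2}\divergence(\xi\nabla v_\xi^{[a]})\,ds+\int_{t_1}^{t_2}u\,dW(s).
\]
I take $\mathbb{E}\|\cdot\|_{H_2^{-1}}^2$ and split it into three terms with $(x+y+z)^2\le 3(x^2+y^2+z^2)$.

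\textbf{Stochastic term.} By the Itô isometry and the Hilbert--Schmidt bound under \eqref{assum:noise},
\[
\mathbb{E}\Big\|\int_{t_1}^{t_2}u\,dW\Big\|_{H_2^{-1}}^2\le C_1\,\mathbb{E}\int_{t_1}^{t_2}\|u\|_{H_2^{-1}}^2ds\le C_1|t_2-t_1|\,\mathbb{E}\sup_t\|u\|_{H_2^{-1}}^2\le C R_1|t_2-t_1|,
\]
using Proposition \ref{prop:1}.

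\textbf{Porous-medium term.} By Lemma \ref{lem:laplace_iso}, $\|\Delta w\|_{H_2^{-1}}=\|\nabla w\|_{L^2}$. Cauchy--Schwarz in time then gives
\[
\Big\|\int_{t_1}^{t_2}\Delta u^{[m]}\Big\|_{H_2^{-1}}^2\le |t_2-t_1|\int_{t_1}^{t_2}\|\nabla u^{[m]}\|_{L^2}^2ds=|t_2-t_1|\,m^2\int_{t_1}^{t_2}\||u|^{m-1}\nabla u\|_{L^2}^2ds.
\]
Its expectation is bounded by $C R_2|t_2-t_1|$ by \eqref{prop25} and Proposition \ref{prop:1}.

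\textbf{Chemotaxis term.} Similarly, $\|\divergence F\|_{H_2^{-1}}\le\|F\|_{L^2}$, and by \eqref{ineq:v_infty},
\[
\|\xi\nabla v_\xi^{[a]}\|_{L^2}\le a\|\xi\|_{L^2}\|v_\xi\|_{L^\infty}^{a-1}\|\nabla v_\xi\|_{L^\infty}\le C\|\xi\|_{L^2}^{a+1}\le C\|\xi\|_{L^{m+1}}^{a+1}.
\]
Hence the chemotaxis contribution is bounded by
\[
C|t_2-t_1|\,\mathbb{E}\int_{t_1}^{t_2}\|\xi\|_{L^{m+1}}^{2a+2}ds\le C|t_2-t_1|\,T\,\mathbb{E}\sup_t\|\xi\|_{L^{m+1}}^{2a+2}.
\]

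\textbf{Main obstacle.} The last expectation must be finite, and this is exactly where $m\ge 2a+1$ is used: it gives $2a+2\le m+1$. Jensen's inequality (with $\Omega$ a probability space) then yields $\mathbb{E}\sup_t\|\xi\|_{L^{m+1}}^{2a+2}\le (\mathbb{E}\sup_t\|\xi\|_{L^{m+1}}^{m+1})^{\frac{2a+2}{m+1}}\le R_2^{\frac{2a+2}{m+1}}$.

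The rest is routine, up to two points. First, the increment must be justified as an $H$-valued identity, which holds since $u$ is the variational solution from Lemma \ref{lemma1}. Second, the Neumann setting requires care with the $H_2^{-1}$ identification, which I would handle exactly as in \eqref{dual:V*toV}. Summing the three bounds gives $C(T,m,R_1,R_2)|t_2-t_1|$, which is (ii).
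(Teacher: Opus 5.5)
Your proof is correct, but for (ii) it takes a different route from the paper. (Part (i) is essentially the paper's argument.)

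\textbf{The paper's route.} It does not estimate the increment term by term. Instead it applies It\^o's formula to $\|u(t)-u_0\|_{H_2^{-1}}^2$ and splits $\Delta u^{[m]}=\Delta(u^{[m]}-u_0^{[m]})+\Delta u_0^{[m]}$. The monotonicity of $r\mapsto r^{[m]}$ then produces the dissipative term $-\int_0^t\|u-u_0\|_{L^{m+1}}^{m+1}ds$. Both the $u_0$ cross term and the chemotactic term are absorbed into it; the latter is estimated via $L^1(\mathcal O)\hookrightarrow H^{-1}_{\frac{m+1}{m}}(\mathcal O)$ and Young. The martingale is handled by BDG. The case $t_1>0$ is obtained by restarting from $u(t_1)$, and this is where a uniform-in-time $L^{m+1}$ moment bound as in (i) is needed.

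\textbf{Your route.} You avoid It\^o's formula and the monotonicity trick altogether, and bound each piece of the integral identity separately:
\begin{itemize}
\item the It\^o isometry for the noise term;
\item $\|\Delta w\|_{H_2^{-1}}\le\|\nabla w\|_{L^2}$ plus Cauchy--Schwarz in time, together with the gradient bound on $\mathbb{E}\int_0^T\||u|^{m-1}\nabla u\|_{L^2}^2\,dt$ from \eqref{prop25}, for the porous-medium term;
\item $\|\nabla\cdot F\|_{H_2^{-1}}\le\|F\|_{L^2}$ with \eqref{ineq:v_infty} for the chemotactic flux.
\end{itemize}

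\textbf{What each buys.} Your argument is more elementary and handles every $t_1$ at once. It never needs $u(t_1)$ in $L^{m+1}$, so (ii) no longer depends on (i). The price is twofold. First, you rely on the second (gradient) energy estimate. Second, because you square an $L^2$ norm of the flux, your chemotactic bound needs $2a+2\le m+1$, i.e.\ the full strength of $m\ge 2a+1$. The paper's estimate through $H^{-1}_{\frac{m+1}{m}}$ only needs $(a+1)\frac{m+1}{m}\le m+1$, i.e.\ $m\ge a+1$, and uses only $L^{m+1}$-moment information.

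\textbf{A small correction.} The statement is for arbitrary $R_1,R_2>0$. So in (ii) you should cite the Gronwall bounds of Step 1 and \eqref{prop25}, whose constants depend on $R_1,R_2$. The self-mapping property of Proposition \ref{prop:1} only gives the bounds $R_1$, $R_2$ for the specially chosen radii. This affects only the constants, not the validity of the argument.
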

\begin{proof}
	\noindent\text{(\romannumeral 1). } Since we have already shown that $\mathcal{T}$ maps $\mathbb{X}_\mathfrak{A}(R_1,R_2)$ i.e. 
	$$ \mathbb{E}\sup\limits_{0\leq t \leq T} \|\mathcal{T}\xi\|_{L^{m+1}}^{m+1}  \leq  R_1, $$ 
	and by Jensen's inequality it follows that 
	$$ \sup\limits_{0\leq t \leq T} \mathbb{E}\|\mathcal{T}\xi\|_{L^{m+1}}^{m+1}  < \mathbb{E}\sup\limits_{0\leq t \leq T} \|\mathcal{T}\xi\|_{L^{m+1}}^{m+1},  $$
	we conclude that 
	$$ \sup\limits_{0\leq t \leq T} \mathbb{E}\|\mathcal{T}\xi\|_{L^{m+1}}^{m+1} \leq R_1. $$
	
	\noindent\text{(\romannumeral 2). } Applying the It\^o formula to the function $\mathcal{T}(u):=\|u(t) -u_0\|_{H_2^{-1}}^2$, we have
	\begin{equation}
		\label{prop31}
		\begin{aligned}
			&\quad\frac{1}{2}\|u(t) - u_0\|_{H_2^{-1}}^2 \\
			&= \int_0^t  {}_{V^*}\langle \Delta u^{[m]}(s), u(s)-u_0 \rangle_V 
			- \chi  _{V^*}\left\langle  \divergence(\xi(s) \nabla v_\xi^{[a]}(s)), u(s)-u_0 \right\rangle_V  ds \\
			&\quad +  \frac{1}{2}\int_0^t \|u(s)\|^2_{L_2(L^2,H_2^{-1})}  ds +  \int_0^t  \langle u(s) , u(s)dW(s)\rangle_{H_2^{-1}}^2  ds\\
			&\leq \int_0^t  {}_{V^*}\langle \Delta( u^{[m]}(s)-u_0^m), u(s)-u_0 \rangle_V ds +\int_{0}^{t} {}_{V^*}\langle \Delta u_0^m, u(s)-u_0 \rangle_V ds\\
			&\quad- \chi\int_0^t {}_{V^*}\left\langle  \divergence(\xi(s) \nabla v_\xi^{[a]}(s)), u(s)-u_0 \right\rangle_V  ds  + C \int_0^t \|u(s)\|^2_{H_2^{-1}}  ds 	\\
			&\quad+  \int_0^t  \langle u(s) , u(s)dW(s)\rangle_{H_2^{-1}}^2  ds\\
			&\leq - \int_{0}^{t} \|u(s) -u_0\|_{L^{m+1}}^{m+1} ds +\int_{0}^{t} {}_{V^*}\langle \Delta u_0^m, u(s)-u_0 \rangle_V ds  \\
			&\quad-   \chi \int_0^t {}_{V^*}\left\langle  \divergence(\xi(s) \nabla v_\xi^{[a]}(s)), u(s)-u_0 \right\rangle_V  ds \\
			&\quad + C\int_{0}^{t}  \|u(s)\|_{H_2^{-1}}^2 ds + 2 \sum_{k=1}^{\infty} \int_{0}^{t} \langle u(s),\mu_k u(s) \psi_{k}\rangle_{H_2^{-1}} d\beta_k(s).
		\end{aligned}
	\end{equation}
	Rearranging terms, we obtain
	\begin{equation}
		\begin{aligned}
			&\quad\frac{1}{2}\|u(t) - u_0\|_{H_2^{-1}}^2 +  \int_{0}^{t} \|u(s) -u_0\|_{L^{m+1}}^{m+1} ds \\
			&\leq  \int_{0}^{t} {}_{V^*}\langle \Delta u_0^m, u(s)-u_0 \rangle_V ds  -   \chi \int_0^t {}_{V^*}\left\langle  \divergence(\xi(s) \nabla v_\xi^{[a]}(s)), u(s)-u_0 \right\rangle_V  ds\\
			&\quad + \sum_{k=1}^{\infty} \int_{0}^{t} \langle u(s),\mu_k u(s) \psi_{k}\rangle_{H_2^{-1}} d\beta_k(s) + C \int_{0}^{t}  \|u(s)\|_{H_2^{-1}}^2 ds\\
			&:= K_1(t) +K_2(t) +K_3(t) + C\int_{0}^{t}  \|u(s)\|_{H_2^{-1}}^2 ds.
		\end{aligned}
	\end{equation}
	Applying the H\"older inequality, Young's inequality and \eqref{dual:V*toV} gives that
	\begin{equation*}
		\begin{aligned}
			\mathbb{E}\left[\sup\limits_{0\leq t\leq r} K_1(t)\right] &\leq  \mathbb{E} \left[\sup\limits_{0\leq t\leq r}\int_{0}^{t} \int_{\mathcal{O}} u_0^m (u(s)-u_0) dxds\right]\\
			&\leq \mathbb{E} \left[\int_{0}^{r} \|u(s) -u_0\|_{L^{m+1}}\|u_0\|^m_{L^{m+1}} ds\right] \\
			&\leq \mathbb{E} \left[\left(\int_{0}^{r} \|u(s) -u_0\|_{L^{m+1}}^{m+1} ds\right)^{\frac{1}{m+1}} \left(\int_{0}^{r} \|u_0\|^{m+1}_{L^{m+1}} ds\right)^{\frac{m}{m+1}}\right] \\
			&\leq \frac{1}{2}\mathbb{E} \left[\int_{0}^{r} \|u(s) -u_0\|_{L^{m+1}}^{m+1} ds\right] + C\mathbb{E}\left[ \int_{0}^{r} \|u_0\|^{m+1}_{L^{m+1}} ds\right] \\
			&\leq \frac{1}{2}\mathbb{E} \left[\int_{0}^{r} \|u(s) -u_0\|_{L^{m+1}}^{m+1} ds\right] +C r \mathbb{E} \left[\|u_0\|^{m+1}_{L^{m+1}} \right].
		\end{aligned}
	\end{equation*}
	By \eqref{dual:V*toV}, we get
	\begin{equation*}
		\begin{aligned}
			\mathbb{E}\left[\sup\limits_{0\leq t\leq r} K_2(t)\right] &=  \chi\mathbb{E} \left[\sup\limits_{0\leq t\leq r}\int_{0}^{t} \int_{\mathcal{O}} (-\nabla)^{-1} (\xi(s) \nabla v_\xi^{[a]}(s)) (u(s)-u_0) dxds\right]\\
			&\leq  \chi\mathbb{E} \left[\int_{0}^{r} \|\xi(s) \nabla v_\xi^{[a]}(s)\|_{H_{\frac{m+1}{m}}^{-1}} \|u(s)-u_0\|_{L^{m+1}} ds\right].
		\end{aligned}
	\end{equation*}
	By the Young's inequality and the Sobolev embedding $L^1(\mathcal{O})\hookrightarrow H_{\frac{m+1}{m}}^{-1}(\mathcal{O})$, we have
	\begin{equation*}
		\begin{aligned}
			\mathbb{E}\left[\sup\limits_{0\leq t\leq r} K_2(t)\right] &\leq  C\mathbb{E} \left[\int_{0}^{r} \|\xi(s) \nabla v_\xi^{[a]}(s)\|^{\frac{m+1}{m}}_{L^1} ds\right] + \frac{1}{4}\mathbb{E}\left[\int_{0}^{r} \|u(s)-u_0\|^{m+1}_{L^{m+1}} ds\right]\\
			&\leq Cr\mathbb{E} \left[\sup\limits_{0\leq s\leq r}\|\xi(s)\|^{m+1}_{L^{m+1}} \right]^{\frac{a+1}{m}} + \frac{1}{4}\mathbb{E}\left[\int_{0}^{r} \|u(s)-u_0\|^{m+1}_{L^{m+1}} ds\right].
		\end{aligned}
	\end{equation*}
	Then we estimate $K_3(t)$ by the Burkholder-Davis-Gundy inequality:
	\begin{equation*}
		\begin{aligned}
			\mathbb{E}\left[\sup\limits_{0\leq t\leq r} K_3(t)\right] &\leq C \mathbb{E} \left[\left(\int_{0}^{r}\|u(s) -u_0\|_{H_2^{-1}}^2 \|u(s)\|^2_{L_2(L^2(\mathcal{O}),H_2^{-1}(\mathcal{O}))} ds\right)^{\frac{1}{2}}\right]\\
			&\leq C \mathbb{E} \left[\left(\sup\limits_{0\leq s\leq r}\|u(s) - u_0\|_{H_2^{-1}}^2  \right)^\frac{1}{2} \left( \int_{0}^{r}\|u(s) \|_{H_2^{-1}}^2 ds\right)^{\frac{1}{2}}\right]\\
			&\leq \frac{1}{4} \mathbb{E} \left[\sup\limits_{0\leq s\leq r}\|u(s) -u_0\|_{H_2^{-1}}^2 \right] +C \mathbb{E} \left[ \int_{0}^{r}\|u(s) \|_{H_2^{-1}}^2 ds \right].
		\end{aligned}
	\end{equation*}
	Combining the estimates for \( K_1 \), \( K_2 \), and \( K_3 \), we rearrange to obtain
	\begin{align*}
		&\quad\frac{1}{4} \mathbb{E} \left[\sup\limits_{0\leq s\leq r}\|u(s) -u_0\|_{H_2^{-1}}^2 \right] +\frac{1}{4} \mathbb{E}\left[\int_{0}^{r} \|u(s)-u_0\|^{m+1}_{L^{m+1}} ds\right] \\
		&\leq r (C\mathbb{E} \left[\|u_0\|^{m+1}_{L^{m+1}} \right] + C(R_1,n,m)) +C r R_2\\
		&:= C(a,m,R_1,R_2) r.
	\end{align*}
	The case  \( t_1 \neq 0 \) follows analogously. Since \(\mathbb{E} \sup\limits_{0\leq s\leq T} \|u(s)\|_{L^{m+1}}^{m+1} < R_1 \), it follows that for any $0\leq t_1 < t_2 \leq T$, we have 
	$$\mathbb{E} \| u(t_1) -u(t_2)\|_{H_2^{-1}}^2 \leq C|t_1-t_2|.$$
\end{proof}

Therefore, we can conclude the existence of at least one fixed point $u \in \mathbb{X}_{\mathfrak{A}}(R_1, R_2)$ such that $\mathcal{T}(u) = u$. By the definition of our operator $\mathcal{T}$, this fixed point is precisely a martingale solution to the stochastic chemotaxis system \eqref{system}. This completes the proof of Theorem \ref{theorem1}.

\section{Preservation of Non-negativity}
Having established the existence of a global martingale solution and the corresponding a priori estimates in Theorem~\ref{theorem1}, we now verify that the solution remains biologically admissible. More precisely, we show that the positive cone is invariant under the stochastic dynamics.

\begin{theorem}\label{theorem2}
	Assume $a\geq1$ and $m\geq2a+1$. Let the initial data
	$u_0\in H^{-1}_2(\mathcal O)$ satisfy
	\[
	\mathbb E\big[\|u_0\|_{L^{m+1}}^{m+1}\big]<\infty,
	\]
	and $u_0\geq0$ almost surely. Then
	\[
	u(t,x)\geq0,
	\quad
	dt\otimes dx\otimes d\mathbb P\text{-a.e. on }
	[0,T]\times\mathcal O\times\Omega .
	\]
\end{theorem}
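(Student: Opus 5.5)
We plan to prove the invariance of the positive cone with a Stampacchia-type truncation argument, applying It\^o's formula to a smooth approximation of the negative part of $u$. Write $u^- := \max(-u,0)$. For $\varepsilon>0$ take a convex $C^2$ function $\phi_\varepsilon:\mathbb R\to[0,\infty)$ with $\phi_\varepsilon(r)=0$ for $r\geq 0$, $\phi_\varepsilon(r)\uparrow (r^-)^2$ as $\varepsilon\to0$, $\phi_\varepsilon'\leq 0$, $0\le \phi_\varepsilon''\leq 2$, and $|\phi'_\varepsilon(r)|\le 2|r|$. Concretely we can take $\phi_\varepsilon(r)=r^2-\varepsilon^2/6$ for $r\le-\varepsilon$, with a cubic $C^2$ transition on $[-\varepsilon,0]$.

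We then work with the fixed point $u$ from Theorem \ref{theorem1}, so $v=v_u$ and the drift is $\Delta u^{[m]}-\chi\divergence(u\nabla v^{[a]})$. The regularity $u\in L^{m+1}$, $\nabla u^{[m]}\in L^2$ from \eqref{est:thm_2} suffices to apply the It\^o formula for $\int_{\mathcal O}\phi_\varepsilon(u)\,dx$ in the $L^2$-setting, after an approximation of $u$ by a Galerkin or mollified sequence if needed. We expect the identity
\[
\int_{\mathcal O}\phi_\varepsilon(u(t))dx=\int_{\mathcal O}\phi_\varepsilon(u_0)dx-\int_0^t\!\!\int_{\mathcal O}\phi_\varepsilon''(u)\,m|u|^{m-1}|\nabla u|^2 dxds+\chi\int_0^t\!\!\int_{\mathcal O}\phi_\varepsilon''(u)\,u\nabla u\cdot\nabla v^{[a]}dxds+\tfrac12\int_0^t\!\!\int_{\mathcal O}\phi_\varepsilon''(u)u^2\sum_k\mu_k^2\psi_k^2 dxds+M_t .
\]
Here $M_t$ is a martingale with zero expectation, justified by a stopping-time localization together with the integrability in \eqref{est:thm_2}. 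The first term vanishes because $u_0\ge0$. The diffusion term is nonpositive. The It\^o correction is bounded by $C_1C\int\phi_\varepsilon''(u)u^2\le C\int (u^-)^2$, using \eqref{assum:noise} and $\|\psi_k\|_\infty\le C\lambda_k$.

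We expect the chemotactic term to be the main obstacle. It is supported on $\{u<0\}$, where we rewrite it as $\chi\int \phi''_\varepsilon(u) u\nabla u\cdot\nabla v^{[a]}$. In the limit $\varepsilon\to0$ this becomes $2\chi\int u^-\nabla u^-\cdot\nabla v^{[a]}=\chi\int\nabla (u^-)^2\cdot\nabla v^{[a]}$.

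Our first choice is to integrate by parts, using the Neumann condition, to get $-\chi\int (u^-)^2\Delta v^{[a]}$. Since $\Delta v^{[a]}=a|v|^{a-1}\Delta v+a(a-1)|v|^{a-3}v|\nabla v|^2$ and $-\Delta v=u-v$, we have $\|\Delta v^{[a]}\|_{L^\infty}\le C(\|u\|_{L^\infty}+\|u\|_{L^2})^{a}$, which involves $\|u\|_{L^\infty}$. In one dimension $\|u^{[m]}\|_{L^\infty}\le C(\|u^{[m]}\|_{L^1}+\|\nabla u^{[m]}\|_{L^2})$, so this is controlled by a random process $\Lambda(t)$ with $\int_0^T\Lambda(t)dt<\infty$ a.s.

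If that route gives trouble, we would instead use Young's inequality directly: absorb $|\nabla u^-|$ into the diffusion term, where $m|u|^{m-1}$ is degenerate, by first splitting $|u|\le\delta$ versus $|u|>\delta$. This is more delicate, which is why we prefer the integration-by-parts route.

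Either way we arrive at $\int(u^-(t))^2\le\int_0^t\Lambda(s)\int(u^-(s))^2ds+M_t$ with $\Lambda\ge0$ pathwise integrable. To handle the random coefficient, we apply It\^o's product rule to $e^{-\int_0^t\Lambda}\int (u^-)^2$, localize by stopping times $\tau_N=\inf\{t:\int_0^t\Lambda>N\}$, and take expectations. This gives $\mathbb E\,e^{-\int_0^{t\wedge\tau_N}\Lambda}\|u^-(t\wedge\tau_N)\|_{L^2}^2\le0$, hence $u^-=0$ on $[0,\tau_N]$. Letting $N\to\infty$ with $\tau_N\to T$ a.s. yields $u\ge0$ $dt\otimes dx\otimes d\mathbb P$-a.e.
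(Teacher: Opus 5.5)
Your argument works for $a=1$ and $a\ge2$, up to a regularity point addressed below. The key bound fails for $1<a<2$, however, and that range is covered by the theorem. There
\[
\Delta v^{[a]}=a|v|^{a-1}\Delta v+a(a-1)|v|^{a-2}\operatorname{sgn}(v)\,|\nabla v|^{2},
\]
and the second term tends to $-\infty$ on the side $\{v<0\}$ of every zero of $v$ at which $\nabla v\neq0$. Nonnegativity of $u$ is what you are proving, so $v=(I-\Delta)^{-1}u$ may change sign. Hence $\Delta v^{[a]}$ is in general only in $L^{1}(\mathcal O)$, and the claim $\|\Delta v^{[a]}\|_{L^\infty}\le C(\|u\|_{L^\infty}+\|u\|_{L^2})^{a}$ is false. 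With an unbounded weight, $-\chi\int_{\mathcal O}(u^-)^2\Delta v^{[a]}\,dx$ cannot be bounded by $\Lambda(t)\|u^-\|_{L^2}^2$, since nothing prevents $u^-$ from concentrating near such a zero. Interpolating against $\|u^-\|_{L^\infty}$ gives only a sublinear power of $\|u^-\|_{L^2}^2$, and Gronwall then does not force $u^-\equiv0$. So the random Gronwall step has no valid $\Lambda$ in this range.

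Your fallback (absorbing $\nabla u$ into the diffusion by Young's inequality) is essentially the paper's route, and it runs into the degeneracy of $|u|^{m-1}$ at $u=0$. The paper's bound $|u|^{3-m}G_\delta''(u)\le C_\delta$ fails as $u\to0^-$ when $m>3$, because $G_\delta''(0^-)=2/\delta$. Moreover, the leftover term $C_\delta\|\nabla v^{[a]}\|_{L^\infty}^2$ does not vanish as $\delta\to0$. So moving the derivative onto $v^{[a]}$ is the better idea; it just needs a different functional.

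\textbf{Fix.} Replace $(u^-)^2$ by $u^-$.
\begin{itemize}
\item Take $\phi_\varepsilon$ convex with $\phi_\varepsilon=0$ on $[-\varepsilon/2,\infty)$, $\phi_\varepsilon'=-1$ on $(-\infty,-\varepsilon]$, and $0\le\phi_\varepsilon''\le4/\varepsilon$. Then $0\le\phi_\varepsilon(r)\le r^-$ and $\phi_\varepsilon(r)\to r^-$. Set $\Psi_\varepsilon(r):=\int_0^r s\,\phi_\varepsilon''(s)\,ds$, so that $|\Psi_\varepsilon|\le\varepsilon$.
\item Because $\phi_\varepsilon$ vanishes near $0$, $\phi_\varepsilon'(u)$ and $\Psi_\varepsilon(u)$ are Lipschitz functions of $u^{[m]}\in H^1$, so all integrations by parts are legitimate. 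This also repairs a gap in your version. Only $\nabla u^{[m]}\in L^2$ is known, and $u^{[m]}\mapsto u$ is not Lipschitz at $0$. So with $\phi_\varepsilon''>0$ up to $0^-$, neither $\nabla\phi_\varepsilon'(u)$ nor the limit object $\nabla(u^-)^2$ need be integrable. Integrate by parts before letting $\varepsilon\to0$.
\item The chemotactic contribution becomes $-\chi\int_{\mathcal O}\Psi_\varepsilon(u)\Delta v^{[a]}\,dx\le\chi\varepsilon\|\Delta v^{[a]}\|_{L^1}$.
\item For $a>1$, the Neumann condition gives $(a-1)\int_{\mathcal O}|v|^{a-2}|\nabla v|^2dx=-\int_{\mathcal O}|v|^{a-2}v\,\Delta v\,dx$. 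Hence $\|\Delta v^{[a]}\|_{L^1}\le2a\|v\|_{L^\infty}^{a-1}\|\Delta v\|_{L^1}\le C\|u\|_{L^{m+1}}^{a}$, which is integrable in $(t,\omega)$ by \eqref{est:thm_1}.
\item The It\^o correction is at most $C\int_{\mathcal O}\phi_\varepsilon''(u)u^2\,dx\le C\varepsilon$, and the diffusion term is nonpositive.
\end{itemize}
Therefore $\mathbb E\int_{\mathcal O}u^-(t)\,dx\le\lim_{\varepsilon\to0}C\varepsilon=0$ for every $a\ge1$, with no Gronwall lemma or stopping times needed.
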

\begin{proof}
	We follow the argument of \cite[Theorem 2.6.2]{barbu2016stochastic} and adapt it to the porous-medium diffusion case. For any $r\in\mathbb R$, we set
	\[
	r^+:=\max\{r,0\},
	\qquad
	r^-:=\max\{-r,0\}.
	\]
	
	Let us define, for $\delta\in(0,1)$,
	\[
	g_\delta(r)
	:=
	\frac{r^2}{\delta+r},
	\qquad
	r\in(-\delta,\infty),
	\]
	and
	\[
	G_\delta(r)
	:=
	g_\delta((r)^-),
	\qquad r\in\mathbb R .
	\]
	Then $G_\delta\in C^2(\mathbb R)$ and
	\[
	G_\delta(r)=G_\delta'(r)=G_\delta''(r)=0,
	\qquad r\geq0,
	\]
	while
	\[
	|G_\delta'(r)|\leq 2r^-,
	\qquad
	|G_\delta''(r)|\leq 8,
	\qquad r\in\mathbb R .
	\]
	
	Now define
	\[
	\phi_\delta:L^2(\mathcal O)\rightarrow\mathbb R
	\]
	by
	\[
	\phi_\delta(u)
	:=
	\int_{\mathcal O}G_\delta(u(x))\,dx .
	\]
	Then $\phi_\delta$ is twice Gâteaux differentiable on $L^2(\mathcal O)$.
	
	We now apply Itô's formula to $\phi_\delta(u(t))$. More precisely, one first applies Itô's formula to a suitable approximation of the equation and then passes to the limit. For convenience, we omit this standard approximation argument and write the resulting identity directly.
	
	Since the stochastic integral is a martingale, we obtain
	\begin{align}
		\mathbb E[\phi_\delta(u(t))]
		&=
		\mathbb E[\phi_\delta(u_0)]
		+
		\mathbb E
		\int_0^t
		\int_{\mathcal O}
		G_\delta'(u)
		\Delta(u^{[m]})\,dx\,ds
		\notag\\
		&\quad
		-\chi
		\mathbb E
		\int_0^t
		\int_{\mathcal O}
		G_\delta'(u)
		\nabla\cdot
		(u\nabla v^{[a]})
		\,dx\,ds
		\notag\\
		&\quad
		+\frac12
		\sum_{k=1}^{\infty}
		\mu_k^2
		\mathbb E
		\int_0^t
		\int_{\mathcal O}
		G_\delta''(u)
		u^2\psi_k^2\,dx\,ds .
		\label{eq:nonnegative-ito}
	\end{align}
	
	Since $u_0\geq0$ almost surely and $G_\delta(r)=0$ for $r\geq0$,
	we have
	\[
	\phi_\delta(u_0)=0 .
	\]
	
	For the porous-medium diffusion term, using the Neumann boundary condition and integration by parts, we obtain
	\begin{align*}
		\int_{\mathcal O}
		G_\delta'(u)\Delta(u^{[m]})\,dx
		&=
		-\int_{\mathcal O}
		G_\delta''(u)
		\nabla u\cdot
		\nabla(u^{[m]})\,dx .
	\end{align*}
	Since
	\[
	\nabla(u^{[m]})
	=
	m|u|^{m-1}\nabla u ,
	\]
	we get
	\[
	-\int_{\mathcal O} G_\delta''(u)\nabla u\cdot\nabla(u^{[m]})\,dx = -m\int_{\mathcal O} G_\delta''(u) |u|^{m-1}|\nabla u|^2\,dx \leq0 .
	\]
	
	Next, we estimate the chemotactic term. Using integration by parts and the Neumann boundary condition, we have
	\begin{align}
		-\chi
		\int_{\mathcal O}
		G_\delta'(u)
		\nabla\cdot
		(u\nabla v^{[a]})
		\,dx
		&
		=
		\chi
		\int_{\mathcal O}
		\nabla G_\delta'(u)
		\cdot
		u\nabla v^{[a]}
		\,dx
		\notag\\
		&
		=
		\chi
		\int_{\mathcal O}
		G_\delta''(u)
		u
		\nabla u\cdot
		\nabla v^{[a]}
		\,dx .
		\label{eq:chemotaxis-positive}
	\end{align}
	
	By Young's inequality,
	\begin{align}
		&\quad\chi
		\int_{\mathcal O}
		G_\delta''(u)
		u
		\nabla u\cdot
		\nabla v^{[a]}
		\,dx
		\notag\\
		&\leq
		\frac{m}{2}
		\int_{\mathcal O}
		G_\delta''(u)
		|u|^{m-1}
		|\nabla u|^2dx
		+C
		\int_{\mathcal O}
		G_\delta''(u)
		|u|^{3-m}
		|\nabla v^{[a]}|^2dx .
	\end{align}
	
	Since $G_\delta''(u)$ is supported on
	$-\delta<u<0$, we have
	\[
	|u|^{3-m}G_\delta''(u)
	\leq C_\delta .
	\]
	Therefore,
	\[
	\chi
	\int_{\mathcal O}
	G_\delta''(u)
	u
	\nabla u\cdot
	\nabla v^{[a]}
	dx
	\leq
	\frac{m}{2}
	\int_{\mathcal O}
	G_\delta''(u)
	|u|^{m-1}
	|\nabla u|^2dx
	+
	C_\delta
	\|\nabla v^{[a]}\|_{L^\infty}^2 .
	\]
	
	By the elliptic estimate \eqref{ineq:v_infty} and the uniform estimate
	\eqref{est:thm_1},
	\[
	\|\nabla v^{[a]}\|_{L^\infty} \leq C\|u\|_{L^{m+1}}^a ,
	\]
	and hence
	\[
	\mathbb E \int_0^t \|\nabla v^{[a]}(s)\|_{L^\infty}^2ds <\infty .
	\]
	
	For the Itô correction term, by the property of $G_\delta''$, we have
	\[
	u^2G_\delta''(u) \leq C\delta^2.
	\]
	Using the noise assumption \eqref{assum:noise}, we obtain
	\begin{align}
		&\frac12 \sum_{k=1}^{\infty} \mu_k^2 \mathbb E \int_0^t \int_{\mathcal O} G_\delta''(u) u^2\psi_k^2dxds	\leq C\delta^2.
	\end{align}
	
	Combining the above estimates with
	\eqref{eq:nonnegative-ito}, and absorbing the diffusion term, we obtain
	\[
	\mathbb E[\phi_\delta(u(t))]\leq C\delta.
	\]
	
	Finally, by the definition of $G_\delta$,
	\[
	G_\delta(r)\rightarrow (r^-)^2,\qquad\text{as }\delta\rightarrow0.
	\]
	Using Fatou's lemma, we obtain
	\[
	\mathbb E\int_{\mathcal O}(u^-(t,x))^2dx\leq0.
	\]
	Therefore,
	\[
	u^-(t,x)=0,	\quad	dt\otimes dx\otimes d\mathbb P\text{-a.e.},
	\]
	which implies
	\[
	u(t,x)\geq0,\quad dt\otimes dx\otimes d\mathbb P\text{-a.e.}
	\]
	This completes the proof.
\end{proof}

\section{Conclusion}

In this paper, we have studied a stochastic Keller--Segel system with porous medium diffusion and nonlinear chemotactic sensitivity. The model is motivated by cell aggregation phenomena in randomly fluctuating biological environments, especially in situations where cells move through crowded or porous media and respond nonlinearly to chemical signals. Compared with the classical Keller--Segel system, the present model incorporates the nonlinear diffusion term $\Delta u^{[m]}$, which describes density-dependent dispersal of the cell population, and the nonlinear chemotactic drift $\nabla\cdot(u\nabla v^{[a]})$, which reflects a power-law response of cells to the perceived chemoattractant. The inclusion of multiplicative noise further captures random environmental effects and stochastic fluctuations in population dynamics. Therefore, the model provides a more realistic mathematical description of chemotactic movement in complex biological media.

The main result of this paper is the global existence of martingale solutions to the stochastic Keller--Segel system under the condition $a\geq1$ and $m\geq2a+1$. We also establish uniform a priori estimates in the spaces $H^{-1}_2(\mathcal O)$ and $L^{m+1}(\mathcal O)$, together with gradient estimates associated with the porous-medium diffusion. Moreover, when the initial cell density is nonnegative, the solution remains nonnegative for all time. This result shows the balance between nonlinear chemotactic aggregation and density-dependent diffusion. In particular, when the chemotactic response becomes stronger, namely when the exponent $a$ increases, a stronger porous-medium diffusion is required to prevent excessive aggregation and ensure the global-in-time existence of solutions.

Due to the degeneracy of the porous-medium diffusion, the strong nonlinearity of the chemotactic drift, and the low regularity caused by the multiplicative stochastic perturbation, the analysis requires establishing suitable estimates for the nonlinear coupling. The main difficulty lies in estimating the nonlinear chemotactic term $\xi\nabla v_\xi^{[a]}$ arising from the power-law sensitivity, whose interaction with the degenerate diffusion and stochastic perturbation prevents a direct application of standard variational SPDE framework. To overcome this difficulty, we construct a decoupled auxiliary system, derive uniform energy estimates, and apply a stochastic Schauder--Tychonoff fixed point theorem to obtain a martingale solution. Compared with the stochastic porous-medium chemotaxis model studied in \cite{mukherjee2025martingale} and the linear chemotactic sensitivity case considered in \cite{wang2025global}, the main novelty of this work is the power-law chemotactic response $v^{[a]}$, which leads to the new balance condition $m\geq 2a+1$.

In conclusion, our results show that stochastic perturbations, nonlinear diffusion, and nonlinear biological sensing can be combined in a mathematically consistent framework for chemotaxis. The obtained global martingale solution and non-negativity result provide a rigorous foundation for the proposed biological model.
\section{Acknowledgments}
This work is supported by the National Natural Science Foundation of China (No. 12271269 and No. 12471141) and Fundamental Research Funds for the Central Universities.

\bibliographystyle{spbasic}
\bibliography{refer}

\end{document}